\theoremstyle{plain}
\newtheorem{prop}{Proposition}[section]
\newtheorem{theo}[prop]{Theorem}
\newtheorem{cor}[prop]{Corollary}
\newtheorem{lem}[prop]{Lemma}
\newtheorem{defn}[prop]{Definition}
\newtheorem{rem}[prop]{Remark}
\begin{document}
\begin{center}
\Large An Open Mapping Theorem for rings which have a zero sequence of units\\
\vspace{0.5cm}
\normalsize Timo Henkel
\footnote{timo.henkel@gmx.net}
\vspace{0.2cm}
\end{center} 
\noindent \textbf{Abstract}. This paper provides an Open Mapping Theorem for topological modules over rings that have a zero sequence consisting of units. As an application it is shown that there is a unique complete and metrisable topolo\-gy on finitely generated modules over certain topological rings (e.g. over complete noetherian Hausdorff Tate rings).
\section*{Introduction}
Let $f$: $X\rightarrow Y$ be a linear and continuous map of two Banach spaces. According to the Open Mapping Theorem (also known as Banach-Schauder Theorem) this map is surjective if and only if it is open. A more general variant of this theorem, in which $X$ and $Y$ are completely metrisable topological vector spaces over a non discrete valued division ring, holds as well (c.f. [BTVS] No.3,  §3, Theorem 1). In this paper we generalize this theorem to completely metrisable topological Hausdorff modules over certain topological rings.  It turns out that we do not even need a valuation on the considered ring. We just have to ensure that there are sufficient many units sufficiently close to $0$. More precisely we will see that it is sufficient for the considered topological ring to have a sequence of units converging to $0$. For example Tate rings, which are of importance in the theory of adic spaces, have this property. Thus the main result of this paper is an Open Mapping Theorem for such rings:
\begin{theo} (c.f. (\ref{setting}))
Let $R$ be a topological ring that has a sequence of units converging to $0$. Let $M$ and $N$ be topological Hausdorff $R$-modules that have countable fundamental systems of open neighborhoods of $0$. Let $M$ be complete, and let $u$: $M\rightarrow N$ be a continuous $R$-linear map. Then the following properties are equivalent:
\begin{enumerate}
\item[1)] $N$ is complete and $u$ is surjective.
\item[1a)] $N$ is complete and $u(M)$ is open in $N$.
\item[2)] $u(M)\subseteq N$ is not meagre.
\item[3)] For all neighbourhoods $V $of $0$ in $M$, $\overline{u(V)}$ is a neighbourhood of $0$ in $N$.
\item[4)] $u$ is open.
\end{enumerate}
\end{theo}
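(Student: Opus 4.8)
The plan is to establish the cycle of implications $1)\Rightarrow 1a)\Rightarrow 2)\Rightarrow 3)\Rightarrow 4)\Rightarrow 1)$. The implication $1)\Rightarrow 1a)$ is immediate, since surjectivity forces $u(M)=N$, which is open. For $1a)\Rightarrow 2)$ I would invoke that a complete metrisable module is a Baire space, so the nonempty open set $u(M)$ (it contains $0$) cannot be meagre. The two substantial implications are $2)\Rightarrow 3)$, a Baire category argument which is the place where the hypothesis on $R$ enters, and $3)\Rightarrow 4)$, a successive approximation argument that uses completeness of $M$; the closing step $4)\Rightarrow 1)$ again exploits the units of $R$.

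For $2)\Rightarrow 3)$, fix a sequence $(a_n)$ of units of $R$ with $a_n\to 0$ and a neighbourhood $V$ of $0$ in $M$. Choosing a neighbourhood $W$ of $0$ with $W-W\subseteq V$, the key observation is that $W$ is absorbing in the module-theoretic sense: for every $x\in M$, continuity of scalar multiplication gives $a_n x\to 0$, so $a_n x\in W$ eventually, i.e. $M=\bigcup_n a_n^{-1}W$. Applying $u$ and using $R$-linearity yields $u(M)=\bigcup_n a_n^{-1}u(W)$. Since $u(M)$ is not meagre, some $a_n^{-1}u(W)$ is not nowhere dense; as multiplication by the unit $a_n$ is a homeomorphism of $N$, it follows that $\overline{u(W)}$ has nonempty interior. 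Finally $\overline{u(W)}-\overline{u(W)}\subseteq\overline{u(W)-u(W)}\subseteq\overline{u(V)}$, and the difference of a set with nonempty interior with itself is a neighbourhood of $0$; hence $\overline{u(V)}$ is a neighbourhood of $0$.

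The implication $3)\Rightarrow 4)$ is the \emph{main obstacle}. Since $u$ is $R$-linear it suffices to show that $u(V)$ is a neighbourhood of $0$ in $N$ for every neighbourhood $V$ of $0$ in $M$. Using the countable fundamental system I would choose nested neighbourhoods $(V_n)_{n\ge 0}$ with $V_0\subseteq V$ and $V_{n+1}+V_{n+1}\subseteq V_n$; by $3)$ each $\overline{u(V_n)}$ is a neighbourhood of $0$, and these may be taken to shrink to $0$. Given $y\in\overline{u(V_1)}$, I would inductively pick $x_n\in V_n$ with $y-\sum_{k\le n}u(x_k)\in\overline{u(V_{n+1})}$, exploiting that the approximation can be corrected at each stage because the previous remainder lies in the closure of $u(V_{n})$. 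The nesting condition $V_{n+1}+V_{n+1}\subseteq V_n$ makes the partial sums $s_n=\sum_{k\le n}x_k$ a Cauchy sequence, so completeness of $M$ provides a limit $x\in M$ with $x\in V$, and continuity of $u$ together with $\overline{u(V_{n+1})}\to 0$ forces $u(x)=y$. Thus $\overline{u(V_1)}\subseteq u(V)$, so $u(V)$ is a neighbourhood of $0$. The delicate points are arranging the fundamental systems so that the limit actually lands inside $V$ and deriving the Cauchy estimate from $V_{n+1}+V_{n+1}\subseteq V_n$.

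For $4)\Rightarrow 1)$, openness gives that $u(M)$ is an open submodule of $N$. Here the units re-enter: for $y\in N$ we have $a_n y\to 0$, so $a_n y\in u(M)$ eventually, and since $u(M)$ is a submodule, $y=a_n^{-1}(a_n y)\in u(M)$; hence $u$ is surjective. Finally $\ker u=u^{-1}(0)$ is closed, and $u$ is an open continuous surjection, hence a topological quotient map, so $N\cong M/\ker u$; as a quotient of a complete metrisable module by a closed submodule this is complete, which yields $1)$ and closes the cycle.
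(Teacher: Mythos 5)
Your global structure is sound and, for most implications, coincides with the paper's: your $2)\Rightarrow 3)$ is essentially Proposition \ref{2->3} verbatim (the paper takes $W$ symmetric with $W+W\subseteq V$ and produces the interior point $0=y-y$ via Remark \ref{add of interp}; your $W-W\subseteq V$ variant is the same argument), and your $4)\Rightarrow 1)$ reproduces the paper's Lemma \ref{M=N} (absorb $y$ into the open submodule $u(M)$ using the unit zero sequence) together with Proposition \ref{quotient complete} for completeness of $N\cong M/\ker u$. Your routing of $1a)$ through the cycle $1)\Rightarrow 1a)\Rightarrow 2)$ via Baire is a mild variant: the paper instead gets $1)\Leftrightarrow 1a)$ directly from Lemma \ref{M=N} and obtains $1)\Rightarrow 2)$ from Baire's theorem; both work. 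The genuine difference is in $3)\Rightarrow 4)$: the paper transfers the problem to right-invariant metrics (Proposition \ref{metrisable}) and quotes the successive-approximation lemma for complete metric spaces ([BTVS] \S 3, Lemma 2, stated as Proposition \ref{closure of balls}), whereas you run the successive approximation directly in the topological group with a nested system $V_{n+1}+V_{n+1}\subseteq V_n$. Your route is self-contained and avoids metrisation entirely; the paper's route outsources the delicate convergence bookkeeping to a citable lemma. Both are legitimate, and your telescoping estimate $V_{n+1}+V_{n+2}+\cdots+V_m\subseteq V_n$ and the containment $\overline{V_0}\subseteq V_0+V_0\subseteq V$ handle the ``limit lands in $V$'' issue correctly.

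One point in your $3)\Rightarrow 4)$ needs repair, though it is easily patched. You assert that the sets $\overline{u(V_{n+1})}$ ``may be taken to shrink to $0$,'' and you use this to conclude $u(x)=y$. A priori this is unjustified: hypothesis $3)$ only tells you each $\overline{u(V_n)}$ is a neighbourhood of $0$ in $N$, not that these closures form a fundamental system there; that they do shrink is a consequence of openness of $u$, i.e.\ of the very implication you are proving, so as written the step is circular. The standard fix is to interleave a countable fundamental system $(W_n)$ of open neighbourhoods of $0$ in $N$: at stage $n$, since $\overline{u(V_{n+1})}\cap W_n$ is again a neighbourhood of $0$, you can choose $x_n\in V_n$ with
\begin{equation*}
y-\sum_{k\leq n}u(x_k)\in \overline{u(V_{n+1})}\cap W_n,
\end{equation*}
which forces $y-u(s_n)\to 0$, and Hausdorffness of $N$ then gives $u(x)=y$. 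With that modification your argument is complete; note this is exactly the kind of bookkeeping the paper sidesteps by invoking the Bourbaki lemma on closed balls.
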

\noindent The proof of this theorem is a  generalization of the proof of the Open Mapping Theorem for topological vector spaces as it is given in [BTVS] No.3,  §3. Although the essence of this theorem surely is the implication of "1) $\Rightarrow$ 4)", the other implications are of interest as well.\\

\noindent This result provides a criterion for a linear map between finitely generated topological modules over such a topological ring to be continuous.

\begin{theo}(c.f. (\ref{lemma cont map}))
Let $R$ be a complete topological Hausdorff ring that has a sequence of units converging to $0$ and a countable fundamental system of open neighbourhoods of $0$. Let $M$ and $N$ be topological $R$-modules such that $M$ is Hausdorff, complete, finitely generated and has a countable fundamental system of open neighbourhoods of $0$. Then each $R$-linear map $f$: $M\rightarrow N$ is continuous.
\end{theo}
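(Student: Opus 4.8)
The plan is to exploit the finite generation of $M$ to realise its topology as a quotient of the product topology on a free module of finite rank, and then to use the Open Mapping Theorem (Theorem 1) to promote the defining surjection to a topological quotient map. Concretely, I would fix generators $m_1,\dots,m_n$ of $M$ and consider the $R$-linear surjection
\[
\pi\colon R^n \longrightarrow M, \qquad (r_1,\dots,r_n)\longmapsto \sum_{i=1}^n r_i m_i,
\]
where $R^n$ carries the product topology. Since addition and scalar multiplication on $M$ are continuous, $\pi$ is continuous. The first substantive task is to check that $R^n$ inherits from $R$ exactly the hypotheses demanded by Theorem 1: it is Hausdorff (finite product of Hausdorff spaces), it is complete (finite product of complete spaces, using completeness of $R$), and it admits a countable fundamental system of open neighbourhoods of $0$ (take finite products of members of such a system for $R$).

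With these verifications in place, $\pi\colon R^n\to M$ is a continuous $R$-linear map between Hausdorff modules with countable fundamental systems of neighbourhoods of $0$, and its source $R^n$ is complete. Since in addition $M$ is complete and $\pi$ is surjective, property 1) of Theorem 1 is satisfied, so the implication ``1) $\Rightarrow$ 4)'' yields that $\pi$ is open. A continuous open surjection is a topological quotient map; hence $M$ carries the quotient topology induced by $\pi$, and a map out of $M$ is continuous precisely when its composition with $\pi$ is continuous.

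It then remains to establish continuity of $f$ through this universal property, i.e.\ to show that $g:=f\circ\pi\colon R^n\to N$ is continuous. Here $g$ is given explicitly by $g(r_1,\dots,r_n)=\sum_{i=1}^n r_i\,f(m_i)$, a finite sum of the maps $r_i\mapsto r_i\,f(m_i)$, each of which is a coordinate projection $R^n\to R$ followed by scalar multiplication against the fixed element $f(m_i)\in N$. Since addition and scalar multiplication on $N$ are continuous, $g$ is continuous, and because $\pi$ is a quotient map, continuity of $g=f\circ\pi$ forces continuity of $f$.

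The crux of the argument, and the only genuinely nontrivial step, is the application of the Open Mapping Theorem that makes $\pi$ open; everything else reduces to the routine verification that $R^n$ meets the hypotheses and that the explicit composite $g$ is continuous. I would also emphasise that no conditions on $N$ beyond its being a topological $R$-module are used, since $N$ enters only through the continuity of its module operations.
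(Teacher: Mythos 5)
Your proposal is correct and follows essentially the same route as the paper: both pass through the continuous surjection $\pi\colon R^n\to M$, invoke the Open Mapping Theorem to conclude $\pi$ is open, and deduce continuity of $f$ from that of $f\circ\pi$ (the paper writes this as $f^{-1}(U)=\pi(\bar f^{-1}(U))$ with $\bar f:=f\circ\pi$, which is exactly your quotient-map argument made explicit). Your explicit verification that $R^n$ inherits the hypotheses of Theorem \ref{setting} is a detail the paper leaves implicit, but it is the same proof.
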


\medskip
\noindent As mentioned above, those results can be applied to Tate rings, which are defined in the last section of this paper (c.f (\ref{defn. Tate Ring})). Tate rings have a topologi\-cally nilpotent unit and thus particularly have a sequence of units converging to $0$. Using this theorems we will get the following result for complete noetherian Hausdorff Tate rings: 

\begin{theo}(c.f. (\ref{unique r mod strc}))
Let $R$ be a complete non-archimedean noetherian topological Hausdorff ring that has a countable fundamental system of neighbourhoods of $0$. Assume that there is a zero sequence of units of $R$ and that the set of all topologically nilpotent elements of $R$ is a neighbourhood of $0$ (e.g. $R$ is a complete noetherian Hausdorff Tate ring).
Let $M$ be a finitely generated $R$-module.\\ Then there exists a unique $R$-module topology on $M$ such that $M$ is Hausdorff, complete and has a countable fundamental system of open neighbourhoods of $0$ (i.e. $M$ is metrisable and complete).
\end{theo}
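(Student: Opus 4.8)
The plan is to separate the statement into its existence and uniqueness halves, and to note at the outset that the uniqueness is essentially a formal consequence of the continuity criterion (\ref{lemma cont map}), whereas the existence is the place where the extra hypotheses ``non-archimedean'', ``noetherian'' and ``the topologically nilpotent elements form a neighbourhood of $0$'' genuinely do work.

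For existence, I would pick generators $x_1,\dots,x_n$ of $M$ and consider the resulting $R$-linear surjection $\pi\colon R^n\to M$, where $R^n$ carries the product topology. Since $R$ is complete, Hausdorff and has a countable fundamental system of open neighbourhoods of $0$, the finite product $R^n$ inherits all three properties. Equipping $M$ with the quotient topology $\tau$ then automatically makes $(M,\tau)$ a topological $R$-module with a countable fundamental system of neighbourhoods of $0$, obtained by pushing the countable basis of $R^n$ forward along the open map $\pi$. What remains is to check that $(M,\tau)$ is Hausdorff and complete, and both reduce to one algebraic fact: the kernel $K=\ker\pi$ is closed in $R^n$. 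Indeed, for a metrisable topological group the quotient by a subgroup is Hausdorff exactly when the subgroup is closed, and the quotient of a complete metrisable group by a closed subgroup is again complete and metrisable; so once $K$ is closed the remaining properties follow from standard facts about topological groups.

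The main obstacle is therefore to prove that \emph{every finitely generated submodule of $R^n$ is closed} (note that $K$ is finitely generated because $R$ is noetherian). This is precisely the step that forces the ring structure into play rather than pure functional analysis; in particular the Open Mapping Theorem (\ref{setting}) does not by itself give closedness of images of injective maps, so it cannot replace this argument. First I would extract a topologically nilpotent unit $\varpi$: the zero sequence of units eventually lands in the neighbourhood of $0$ consisting of topologically nilpotent elements, so some unit $\varpi$ is topologically nilpotent. Using the non-archimedean hypothesis I would then pass to an open, bounded subring $R_0$ (a ring of definition) on which the induced topology is the $\varpi$-adic topology, so that $R_0$ is complete, $\varpi R_0\subseteq\operatorname{Jac}(R_0)$ (completeness makes every element of $1+\varpi R_0$ a unit), and $\{\varpi^k R_0^n\}_k$ is a fundamental system of neighbourhoods of $0$ in $R^n$. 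Granting that $R_0$ may be taken noetherian and that $K\cap R_0^n$ is finitely generated over $R_0$ --- the point at which the noetherian hypothesis on $R$ must be leveraged, and the delicate bookkeeping I expect to occupy most of the argument --- the closedness of $K$ follows from the classical theory of adic modules: the Artin--Rees lemma shows that the subspace and $\varpi$-adic topologies on $K\cap R_0^n$ agree, and the Krull intersection theorem yields $\bigcap_k\,(K+\varpi^k R_0^n)=K$, which is exactly the statement that $K$ coincides with its closure.

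For uniqueness, suppose $\tau_1$ and $\tau_2$ are two topologies on $M$, each making it a Hausdorff, complete, finitely generated $R$-module with a countable fundamental system of open neighbourhoods of $0$. Since $R$ satisfies the hypotheses of (\ref{lemma cont map}), that theorem applies to the identity $\operatorname{id}\colon (M,\tau_1)\to (M,\tau_2)$, whose source is Hausdorff, complete, finitely generated and countably based and whose target is an arbitrary topological $R$-module; hence $\operatorname{id}$ is continuous, i.e.\ $\tau_2\subseteq\tau_1$. Exchanging the roles of $\tau_1$ and $\tau_2$ gives the reverse inclusion, so $\tau_1=\tau_2$. One could instead phrase this last step through the Open Mapping Theorem, but the continuity criterion already closes the argument directly, and this completes the plan.
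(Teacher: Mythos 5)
Your uniqueness argument is exactly the paper's (the identity map plus (\ref{lemma cont map}), with symmetry in place of the paper's appeal to (\ref{main.cor})), and your existence skeleton --- surjection $\pi\colon R^n\to M$, quotient topology, reduction to closedness of $K=\ker\pi$ via (\ref{quotient complete}) --- also matches. The genuine gap is in the one step you yourself flag: ``granting that $R_0$ may be taken noetherian and that $K\cap R_0^n$ is finitely generated over $R_0$.'' Neither assertion follows from the hypotheses. The noetherian hypothesis is on $R$, and a noetherian ($\varpi$-localised) Tate ring is not known to admit a noetherian ring of definition in general; even in $\mathbb{Q}_p$ there are non-noetherian open bounded subrings, so ``may be taken noetherian'' requires a construction you do not supply, and without $R_0$ noetherian both the finite generation of $K\cap R_0^n$ over $R_0$ and the Artin--Rees/Krull machinery collapse. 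Worse, the theorem's hypotheses are strictly weaker than ``Tate'' (that is only an example): they require only that $R^{oo}$ be a neighbourhood of $0$, and a non-archimedean ring with $R^{oo}$ open and a topologically nilpotent unit need not contain any open \emph{bounded} subring carrying the $\varpi$-adic topology ($R^{oo}$ itself can be unbounded, as in $R=\mathbb{Q}_p[x]/(x^2)$ topologised via $\mathbb{Z}_p+\mathbb{Z}_p x$), so the ring of definition your argument starts from may simply not exist in the stated generality.

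Your parenthetical claim that the Open Mapping Theorem ``cannot replace this argument'' is also mistaken, and it is precisely how the paper closes the hole. The paper proves (Proposition \ref{noeth equival closed}) that in a complete noetherian module every submodule is closed, as follows: the closure $\overline{K}$ of $K$ in $R^n$ is a submodule of the noetherian module $R^n$, hence finitely generated, and coincides with the completion $K^{\textasciicircum}$; applying Theorem (\ref{setting}) to a surjection $R^m\twoheadrightarrow \overline{K}$ shows this map is open, so $\sum_i R^{oo}\cdot m_i$ is a neighbourhood of $0$ in $\overline{K}$, whence by density $\overline{K}=K+\sum_i R^{oo}\cdot m_i$ (Proposition \ref{noetherian, completition finitely generated => complete}); finally a determinant/Nakayama-type argument (Lemma \ref{lemma M=N+A => M=N}), using that $\det(I_l-A)=1-a$ with $a\in R^{oo}$ is a unit by completeness, yields $K=\overline{K}$. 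This route uses only the stated hypotheses ($R^{oo}$ a neighbourhood of $0$, non-archimedeanness to make $R^{oo}$ an additive group, completeness for the geometric series) and needs no ring of definition, no Artin--Rees, and no noetherian $R_0$. To repair your proposal you would either have to prove the existence of a suitable noetherian ring of definition under the theorem's hypotheses --- which is not available --- or substitute the paper's open-mapping-plus-Nakayama argument for your adic one.
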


\noindent This paper is divided into two sections.  
In the first section we will prove Theorem 0.1 and Theorem 0.2 as a corollary. The second section gives the definition of Tate rings and the proof of Theorem 0.3.\\

\noindent \textbf{Notations and assumptions}
 \begin{enumerate}
\item Let all rings be with 1. All rings in the first section are not necessarily commutative, whereas all rings in the second section shall have this property.
\item For a ring $R$ an $R$-module is a left $R$-module.
\item For a subspace $Y$ of a topological space $X$, $\overline{Y} \subseteq X$ denotes the topological closure of $Y$ in $X$.
\item For a ring $R$, $R$* denotes the group of units of $R$.
\item For a metric space $X$ with metric $d$, $B_r(x_0)$ denotes the closed ball with centre $x_0$ and radius $r$, i.e. $B_r(x_0)={ \lbrace x\in X \ | \ d(x,x_0)\leq r  \rbrace }$.
\end{enumerate}

\noindent \textbf{Acknowledgements}.
I am deeply grateful to Torsten Wedhorn who engaged my interest for this topic by illustrating its relevance and applications. I also appreciate his patience and helpful advice he gave to me in countless discussions.

\section{Open Mapping Theorem for rings with a unit zero sequence}

After giving some remarks on topological groups and Baire spaces, we will state and prove an Open Mapping Theorem for topological rings in this section.

\subsection{Definitions and remarks on topological groups and Baire spaces}

\begin{defn}\label{complete}
\emph{Let $G$ be a topological group with neutral element $e$. \begin{enumerate}
\item A \emph{Cauchy sequence} $(x_n)_{n\in \mathbb{N}}$ in $G$ is a sequence in $G$ such that for all neighbourhoods $V$ of $e$ there is an $N\in \mathbb{N}$ with $x_m\cdot x_k^{-1} \in V \ \forall \ m,k\geq N$.
\item If $G$ has a countable fundamental system of open neighbourhoods of $e$, it is said to be \emph{complete} if every Cauchy sequence in $G$ converges in $G$.
\item  A topological Hausdorff module (resp. a topological Hausdorff ring) that has a countable fundamental system of open neighbourhoods of the neutral element of its additive group  is said to be \emph{complete} if its additive group is complete.
\end{enumerate}}
\end{defn}

\noindent In general one defines completeness for uniform spaces by the notion of Cauchy filters. Since in the metrisable case this is equivalent to work with Cauchy sequences (c.f. [BGT] IX, §2.6, Prop. 9), the definition given above is sufficient for this paper . \\

\noindent The following proposition shows that a complete topological group, that has a countable fundamental system of open neighbourhoods of its neutral element, is completely metrisable. 
\medskip

\begin{prop}\label{metrisable}
A topological group $G$ with neutral element $e$ is metrisable if and only if it is Hausdorff and has a countable fundamental system of open neighbourhoods of $e$. \\
If $G$ is metrisable one can find a metric $d$ on $G$ that is right-invariant (i.e. $d(x,y)=d(x\cdot z, y\cdot z)$ for all $x,y,z \in G$)  (resp. left-invariant) and induces the topology $G$ is endowed with. 
\end{prop}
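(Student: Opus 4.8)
The plan is to prove this by the classical Birkhoff--Kakutani construction. One direction is immediate: if $G$ carries a metric $d$ inducing its topology, then $G$ is Hausdorff and the balls $\{x : d(x,e) < 1/n\}$, $n \in \mathbb{N}$, form a countable fundamental system of open neighbourhoods of $e$. For the converse I must manufacture a right-invariant metric out of a countable neighbourhood basis, and I shall do this by building an auxiliary weight function on $G$ and then symmetrising it into a genuine metric via a chain (infimum-over-paths) construction.

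First I would replace the given countable fundamental system by a better-behaved one. Using continuity of multiplication and inversion at $e$, I can recursively extract a decreasing sequence of symmetric open neighbourhoods $(V_n)_{n\geq 0}$ of $e$, with $V_0 = G$, satisfying the nesting condition $V_{n+1}\cdot V_{n+1}\cdot V_{n+1}\subseteq V_n$, and such that the $V_n$ still form a fundamental system of neighbourhoods of $e$. Since $G$ is Hausdorff, $\bigcap_n V_n = \{e\}$. I then define $f\colon G\to[0,1]$ by $f(e)=0$, $f(x)=2^{-n}$ for $x\in V_n\setminus V_{n+1}$, and $f(x)=1$ for $x\notin V_1$; because each $V_n$ is symmetric, $f(x)=f(x^{-1})$. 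Next I set
\[
d(x,y)=\inf\Big\{\textstyle\sum_{i=1}^{k} f\big(x_i\,x_{i-1}^{-1}\big)\ :\ k\in\mathbb{N},\ x_0=x,\ x_k=y\Big\},
\]
the infimum being taken over all finite chains from $x$ to $y$. That $d$ is symmetric, vanishes on the diagonal, and satisfies the triangle inequality (by concatenating chains) is routine. Right-invariance is built in: replacing a chain $(x_i)$ by $(x_i z)$ leaves each term $f(x_i z\,(x_{i-1}z)^{-1})=f(x_i x_{i-1}^{-1})$ unchanged, so $d(xz,yz)=d(x,y)$.

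The main obstacle is the key estimate relating $d$ back to $f$, namely
\[
\tfrac12\, f\big(yx^{-1}\big)\ \leq\ d(x,y)\ \leq\ f\big(yx^{-1}\big).
\]
The upper bound is trivial, taking the one-step chain $x_0=x,\,x_1=y$. The lower bound is the combinatorial heart of the proof: I would show by induction on the chain length $k$ that $f(yx^{-1})\leq 2\sum_{i=1}^k f(x_ix_{i-1}^{-1})$, splitting the chain at the point where the accumulated partial sum first exceeds half of the total and applying the nesting condition $V_{n+1}^3\subseteq V_n$ to the three resulting pieces. Granting this estimate, the balls of $d$ centred at $e$ are squeezed between consecutive $V_n$, so $d$ induces exactly the original topology; and since $\bigcap_n V_n=\{e\}$, the estimate forces $d(x,y)=0\Rightarrow yx^{-1}\in\bigcap_n V_n\Rightarrow x=y$, so $d$ is a genuine metric, right-invariant by construction. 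Finally, to obtain a left-invariant metric I would run the identical argument with $f(x_{i-1}^{-1}x_i)$ in place of $f(x_ix_{i-1}^{-1})$ in the definition of $d$ (equivalently, apply the right-invariant case to the opposite group), which yields a left-invariant metric inducing the same topology.
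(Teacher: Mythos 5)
Your argument is correct, but it takes a genuinely different route from the paper: the paper offers no proof of its own, simply citing Bourbaki ([BGT] IX, \S 3.1, Propositions 1 and 2), where metrisability is deduced from the general metrization theorem for uniform structures with a countable fundamental system of entourages, applied to the right (resp.\ left) uniformity of the group. You instead carry out the classical Birkhoff--Kakutani construction directly, and all the essential pieces are in place: the recursive extraction of symmetric open $V_n$ with $V_{n+1}\cdot V_{n+1}\cdot V_{n+1}\subseteq V_n$ refining the given countable basis, the dyadic weight $f$, the right-invariant chain pseudometric, and above all the two-sided estimate $\tfrac12 f(yx^{-1})\le d(x,y)\le f(yx^{-1})$. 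Your sketch of its inductive proof --- splitting a chain at the index where the partial sum first exceeds half the total, so that the two outer pieces each carry at most half the total weight and the single middle step at most the total, then applying the nesting condition --- is the standard and correct argument; it is worth noting that the nesting is used in the equivalent form $f(abc)\le 2\max\lbrace f(a),f(b),f(c)\rbrace$, which holds precisely because $f$ takes only the values $0$ and $2^{-n}$, and that Hausdorffness enters exactly once, through $\bigcap_n V_n=\lbrace e\rbrace$, to promote the pseudometric to a metric. One point you leave implicit but should state: the squeeze between the $d$-balls and the $V_n$ compares the two topologies only at $e$, so you need the remark that right translations are homeomorphisms for $G$ and isometries for $d$, which transports the comparison to every point. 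As for what each approach buys: the paper's citation is economical because it leans on Bourbaki's uniform-space framework elsewhere anyway (completeness, quotients in (\ref{quotient complete})), whereas your proof is elementary and self-contained, produces the invariant metric explicitly, and makes visible the quantitative relation between the metric and the neighbourhood filter that the citation hides; the passage to the left-invariant case via the opposite group is also handled correctly.
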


\begin{proof}
\ensuremath{}  [BGT] IX, §3.1, Proposition 1 and Proposition 2.
\end{proof}

\medskip

\begin{cor}\label{complete together}
Let $G$ be a complete topological Hausdorff group that has a countable fundamental system of open neighbourhoods of its neutral element. Then $G$ is completely metrisable. 
\end{cor}

\begin{defn}\label{baire spaces} (Baire spaces) \\
\emph{A topological space $X$ is said to be a \emph{Baire space} if every nonempty open subset of $X$ is not meagre in $X$.}
\end{defn}

\begin{prop}\label{baire theo} (Baire's Theorem)\\
Every completely metrisable topological space is a Baire space.
\end{prop}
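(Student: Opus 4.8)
The plan is to fix a complete metric $d$ on $X$ inducing its topology (this is exactly what complete metrisability provides) and then to establish the equivalent ``dense $G_\delta$'' form of the statement: any countable intersection $\bigcap_{n\in\mathbb{N}} U_n$ of dense open subsets $U_n\subseteq X$ is dense in $X$. From this the formulation in terms of meagreness follows by complementation. Indeed, suppose a nonempty open set $U$ were meagre, say $U=\bigcup_n A_n$ with each $A_n$ nowhere dense. Then each $\overline{A_n}$ is closed with empty interior, so each $U_n:=X\setminus\overline{A_n}$ is dense and open. Their intersection $\bigcap_n U_n=X\setminus\bigcup_n\overline{A_n}$ would be dense, yet it is disjoint from $U\subseteq\bigcup_n A_n\subseteq\bigcup_n\overline{A_n}$. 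A dense set must meet every nonempty open set, so this contradicts $U\neq\emptyset$; hence no nonempty open $U$ is meagre, which is the assertion.

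To prove density of $\bigcap_n U_n$, I would show that it meets an arbitrary nonempty open set $W$. The heart of the argument is a recursive construction of a nested sequence of closed balls. Since $U_1$ is dense and open, $W\cap U_1$ is nonempty and open, so I can choose $x_1$ and a radius $0<r_1<1$ with $B_{r_1}(x_1)\subseteq W\cap U_1$. Inductively, given $B_{r_n}(x_n)$, the density of $U_{n+1}$ ensures that the intersection of $U_{n+1}$ with the open interior of $B_{r_n}(x_n)$ is nonempty and open, so I can pick $x_{n+1}$ and a radius $0<r_{n+1}<2^{-n}$ with $B_{r_{n+1}}(x_{n+1})$ contained in that set; in particular $B_{r_{n+1}}(x_{n+1})\subseteq B_{r_n}(x_n)\cap U_{n+1}$.

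The radii tend to $0$ and the balls are nested, which forces $(x_n)_{n}$ to be a Cauchy sequence: for all $m\ge n$ one has $x_m\in B_{r_n}(x_n)$, hence $d(x_m,x_n)\le r_n<2^{-n+1}$. By completeness of $d$ the sequence converges to some $x\in X$, and since each ball is closed and contains the whole tail of the sequence, $x\in B_{r_n}(x_n)$ for every $n$. Therefore $x\in B_{r_1}(x_1)\subseteq W$ and $x\in B_{r_n}(x_n)\subseteq U_n$ for all $n$, so $x\in W\cap\bigcap_n U_n$, proving density.

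The step I expect to require the most care is the inductive choice of the balls. One must select each new ball inside the open \emph{interior} of the previous one, not merely inside the closed ball, so that the openness used to apply density of $U_{n+1}$ is genuinely available; and one must keep the radii summably small (here $r_n<2^{-n+1}$) so that the Cauchy property, and hence convergence, is guaranteed. Once the nested sequence is in place, the remaining verifications are routine.
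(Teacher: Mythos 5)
Your proof is correct, but note that the paper does not actually prove this proposition in the text: it simply cites [BGT] IX, \S 5.3, Theorem 1, so you have supplied an argument where the paper defers to Bourbaki. Your argument is the classical self-contained proof for the metrisable case: first reduce the Baire property to the dense-$G_\delta$ formulation by replacing each nowhere dense piece $A_n$ by its closure and complementing (this step is carried out correctly --- a meagre set is a countable union of nowhere dense sets, and a dense set must meet the given nonempty open set), then run the nested-closed-balls construction. This is exactly the level of generality the paper needs, since Baire's theorem is only ever applied here to completely metrisable groups and modules (via Corollary \ref{complete together}, in the implication $1) \Rightarrow 2)$ of Theorem \ref{setting}, and in Proposition \ref{noeth equival closed}), whereas Bourbaki's cited proof covers broader classes of spaces; your version buys self-containedness at no real cost. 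Two minor observations: your use of closed balls agrees with the paper's stated convention that $B_r(x_0)$ denotes the \emph{closed} ball, and your insistence on choosing each new ball inside the interior of the previous closed ball is precisely the point where careless write-ups go wrong, so flagging it is apt; second, summability of the radii is not actually needed --- your own estimate $d(x_m,x_n)\le r_n$ for $m\ge n$ shows that $r_n\to 0$ alone gives the Cauchy property, so the bound $r_{n+1}<2^{-n}$ is a convenient but stronger choice than necessary.
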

\begin{proof}
\ensuremath{} [BGT] IX, §5.3, Theorem 1.
\end{proof}

\medskip

\noindent The following subsections give the proof of the main result of this work:

\begin{theo}\label{setting}
Let $R$ be a topological ring that has a sequence of units converging to $0$. Let $M$ and $N$ be topological Hausdorff $R$-modules that have countable fundamental systems of open neighborhoods of $0$. Let $M$ be complete, and let $u$: $M\rightarrow N$ be a continuous $R$-linear map. Then the following properties are equivalent:
\begin{enumerate}
\item[1)] $N$ is complete and $u$ is surjective.
\item[1a)] $N$ is complete and $u(M)$ is open in $N$.
\item[2)] $u(M)\subseteq N$ is not meagre.
\item[3)] For all neighbourhoods $V $of $0$ in $M$, $\overline{u(V)}$ is a neighbourhood of $0$ in $N$.
\item[4)] $u$ is open.
\end{enumerate}
\end{theo}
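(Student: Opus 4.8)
The plan is to prove all five properties equivalent by establishing the cycle of implications $1) \Rightarrow 1a) \Rightarrow 2) \Rightarrow 3) \Rightarrow 4) \Rightarrow 1)$, so that completeness of $M$, the countable fundamental systems, and -- crucially -- the zero sequence of units each enter at a definite place. The implication $1) \Rightarrow 1a)$ is immediate, since surjectivity gives $u(M)=N$, which is open. For $1a) \Rightarrow 2)$ I would invoke that a complete Hausdorff group with a countable fundamental system of neighbourhoods of $0$ is completely metrisable (Corollary \ref{complete together}) and hence a Baire space (Proposition \ref{baire theo}); thus the nonempty open set $u(M)$ cannot be meagre.

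The first place where the units enter is $2) \Rightarrow 3)$. Given a neighbourhood $V$ of $0$ in $M$ and a zero sequence of units $(t_n)$, continuity of scalar multiplication gives $t_n x \to 0$ for every $x\in M$, so $M=\bigcup_n t_n^{-1}V$ and therefore $u(M)=\bigcup_n t_n^{-1}\,u(V)$ by $R$-linearity. Since $u(M)$ is not meagre, at least one $t_n^{-1}u(V)$ fails to be nowhere dense; as multiplication by the unit $t_n$ is a homeomorphism of $N$, it follows that $\overline{u(V)}$ has nonempty interior. To upgrade ``nonempty interior'' to ``neighbourhood of $0$'', I would apply this to a symmetric $W$ with $W-W\subseteq V$ and use the inclusion $\overline{u(W)}-\overline{u(W)}\subseteq\overline{u(W-W)}\subseteq\overline{u(V)}$, the left-hand side being a neighbourhood of $0$ because $\overline{u(W)}$ has interior.

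The technical heart -- and the step I expect to be the main obstacle -- is $3) \Rightarrow 4)$, where completeness of $M$ must be used to ``remove the closure''. Fix a neighbourhood $V$ and, using regularity of topological groups together with the countable fundamental system, pick neighbourhoods $(W_k)_{k\ge 0}$ of $0$ with $\overline{W_0}\subseteq V$, with $W_{k+1}+W_{k+1}\subseteq W_k$, and forming a fundamental system. By $3)$ each $\overline{u(W_k)}$ is a neighbourhood of $0$, so it suffices to prove $\overline{u(W_1)}\subseteq u(V)$. Given $y\in\overline{u(W_1)}$ I would construct inductively $x_k\in W_k$ with $y-\sum_{k\le m}u(x_k)\in\overline{u(W_{m+1})}$, using at each step that $\overline{u(W_{m+1})}$ is a neighbourhood of $0$ to find the next approximation. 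The telescoping relation $\sum_{k=p+1}^{q}W_k\subseteq W_p$ makes the partial sums $\sigma_m=\sum_{k\le m}x_k$ a Cauchy sequence lying in $\overline{W_0}$; completeness of $M$ yields a limit $x\in\overline{W_0}\subseteq V$, while continuity of $u$, the Hausdorff property of $N$, and $u(W_{m+1})\to 0$ force $u(x)=y$. Hence $u(V)\supseteq\overline{u(W_1)}$ is a neighbourhood of $0$, and openness on arbitrary open sets follows by translation.

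Finally, for $4) \Rightarrow 1)$ the units reappear. If $u$ is open then $u(M)$ is an open submodule of $N$; but a zero sequence of units forces every open submodule to be all of $N$ (for $w\in N$ one has $t_n w\to 0$, hence $t_n w$ lies in the open submodule for large $n$, and scaling back by the unit $t_n^{-1}$ shows $w$ does too). Thus $u$ is surjective. Completeness of $N$ then follows because the open continuous surjection $u$ identifies $N$ with the quotient $M/\ker u$ of the complete metrisable group $M$ by the closed subgroup $\ker u$, which is again complete. This closes the cycle and establishes the equivalence of all five statements.
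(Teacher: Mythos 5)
Your proposal is correct, and on four of the five implications it matches the paper's proof: the Baire argument for getting from surjectivity/openness of the image to non-meagreness (via Corollary \ref{complete together} and Proposition \ref{baire theo}), the proof of $2)\Rightarrow 3)$ (cover $M$ by $t_n^{-1}W$ using the unit zero sequence, apply the category argument, then symmetrize with $\overline{u(W)}-\overline{u(W)}\subseteq\overline{u(V)}$) is exactly Proposition \ref{2->3}, and your $4)\Rightarrow 1)$ (open submodule forces $u(M)=N$ by unit-scaling, then $N\cong M/\ker u$ is complete) is exactly Lemma \ref{M=N} together with Propositions \ref{quotient complete} and \ref{4->1}. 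The one genuine divergence is the technical heart, $3)\Rightarrow 4)$: the paper equips $M$ and $N$ with right-invariant metrics (Proposition \ref{metrisable}), uses right-invariance to verify the uniform hypothesis $B_{\rho(r)}(u(x))\subseteq\overline{u(B_r(x))}$ for all $x$, and then invokes Bourbaki's lemma ([BTVS] \S 3, Lemma 2, quoted as Proposition \ref{closure of balls}) as a black box to pass from $\overline{u(B_r(0))}$ to $u(B_a(0))$; you instead prove the closure-removal directly by the classical successive-approximation scheme --- shrinking neighbourhoods with $W_{k+1}+W_{k+1}\subseteq W_k$ forming a fundamental system and $\overline{W_0}\subseteq V$, inductive choice of $x_k\in W_k$ with $y-\sum_{k\le m}u(x_k)\in\overline{u(W_{m+1})}$, Cauchy partial sums by the telescoping inclusion, completeness of $M$ for the limit, and continuity of $u$ plus Hausdorffness of $N$ to conclude $u(x)=y$ (note your final step tacitly needs $\overline{u(W_k)}\to 0$, which does follow since the $W_k$ form a fundamental system, $u$ is continuous, and $N$, being a topological group, is regular). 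Your route is self-contained and stays in the topological-group language throughout, at the cost of redoing the induction that the cited Bourbaki lemma encapsulates; the paper's route is shorter but depends on metrisation and the external reference. A cosmetic difference: you close the cycle through 1a) via $1)\Rightarrow 1a)\Rightarrow 2)$, whereas the paper proves $1)\Rightarrow 2)$ directly and obtains $1)\Leftrightarrow 1a)$ as a consequence of Lemma \ref{M=N}; both arrangements cover all five statements.
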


\noindent The proof will follow and generalize arguments of [BTVS] I, §3. We will prove each implication of the theorem within each of the following subsections, where $1) \Leftrightarrow 1a)$ is a consequence of (\ref{M=N}). Since some implications can be proven in a more general variant, we will not always operate in the exact setting above. We have already shown $1) \Rightarrow 2)$. Hence we start with:

\subsection{2) implies 3)}

\begin{lem}\label{conv.in.top.mod}
Let $R$ be a topological ring and $(r_i)_{i\in I}$ an arbitrary family of elements of $R$ such that for each neighbourhood $U$ of $0$ there is an $i\in I$ with $r_i \in U$ (e.g. $I=\mathbb{N}$ and  $(r_i)_{i\in I}$ is a sequence converging to $0$). Let $M$ be a topological $R$-module. Then for every $m\in M$ and for every neighbourhood $V$ of $0$ in $M$ there is an $i\in I$ with $r_i\cdot m\in V$.
\end{lem}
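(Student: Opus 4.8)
The plan is to exploit the continuity of scalar multiplication, which is built into the definition of a topological module. First I would fix $m\in M$ and introduce the map $\varphi\colon R\rightarrow M$, $\varphi(r)=r\cdot m$. Since the scalar multiplication $R\times M\rightarrow M$ is continuous and the insertion $r\mapsto (r,m)$ is continuous, the composite $\varphi$ is continuous as well. This reduces the statement to an assertion about a single continuous map $R\rightarrow M$.

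Next I would evaluate $\varphi$ at the identity element $0$ of the additive group of $R$. In any module one has $0\cdot m=0$, so $\varphi(0)=0$. Hence, given any neighbourhood $V$ of $0$ in $M$, the continuity of $\varphi$ at $0$ yields that $U:=\varphi^{-1}(V)$ is a neighbourhood of $0$ in $R$ satisfying $\varphi(U)\subseteq V$; concretely, $r\in U$ forces $r\cdot m\in V$.

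Finally I would invoke the defining property of the family $(r_i)_{i\in I}$: because $U$ is a neighbourhood of $0$ in $R$, there exists some index $i\in I$ with $r_i\in U$. For this $i$ we then obtain $r_i\cdot m=\varphi(r_i)\in V$, which is precisely the claim.

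I do not expect a genuine obstacle here, since the statement is essentially just a restatement of the continuity of scalar multiplication combined with the hypothesis on the family. The only step deserving a moment's care is the elementary identity $0\cdot m=0$, which is what guarantees that $0$ actually lies in the preimage $\varphi^{-1}(V)$, so that this preimage qualifies as a neighbourhood of $0$ in $R$ to which the hypothesis on $(r_i)_{i\in I}$ can be applied.
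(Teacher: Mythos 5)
Your proof is correct and follows essentially the same route as the paper: both arguments use the continuity of scalar multiplication at the point $(0,m)$ together with $0\cdot m=0$ to produce a neighbourhood of $0$ in $R$ to which the hypothesis on $(r_i)_{i\in I}$ applies. The only cosmetic difference is that you work with the slice map $\varphi(r)=r\cdot m$, while the paper takes a product neighbourhood $U_1\times U_2$ inside $\pi^{-1}(V)$ for the full multiplication map $\pi$.
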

\begin{proof}
Let $\pi$: $R \times M \rightarrow M$ be the continuous scalar multiplication of the $R$-module $M$. 
For all $m\in M$ there are $0\in U_1 \subseteq R$ open and $m\in U_2\subseteq M$ open with $ U_1 \times U_2 \subseteq \pi^{-1}(V)$. By assumption there is an $j \in I$ with $r_j\in U_1$. Therefore $(r_j,m)\in \pi^{-1}(V)$ i.e. $r_j\cdot m \in V$.
\end{proof}
\medskip

\begin{rem}\label{add of interp}
\emph{Let $G$ be a topological group. Let $x_1, x_2 \in G$ such that $V_i \subseteq G$ is a neighbourhood of $x_i$ for $i=1,2$. Then $V_1\cdot V_2$ is a neighbourhood of $x_1\cdot x_2$.}
\end{rem}

\begin{prop}\label{2->3}
$2)\Rightarrow 3)$: Let $R$ be a topological ring that has a sequence of units  $(r_n)_{n\in \mathbb{N}}$ converging to $0$. Let $M$ and $N$ be topological $R$-modules, and let $u$: $M\rightarrow N$ be a linear map such that $u(M)\subseteq N$ is not meagre. Then for every neighbourhood $V\subseteq M$ of $0$ the set $\overline{u(V)}$ is a neighbourhood of $0$ in $N$.
\end{prop}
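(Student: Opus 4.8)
The plan is to exploit the zero sequence of units to cover $M$ by scalar multiples of $V$, to transport non-meagreness through $u$, and finally to recentre the resulting interior point at $0$.

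First I would fix a neighbourhood $V$ of $0$ in $M$. Applying Lemma \ref{conv.in.top.mod} to the zero sequence of units $(r_n)_{n\in\mathbb{N}}$, for every $m\in M$ there is an $n$ with $r_n\cdot m\in V$, i.e. $m\in r_n^{-1}V$ (the inverse $r_n^{-1}$ exists since each $r_n$ is a unit). Hence $M=\bigcup_{n\in\mathbb{N}} r_n^{-1}V$. Applying the $R$-linear map $u$ and using that $u(r_n^{-1}V)=r_n^{-1}u(V)$ by linearity, I obtain $u(M)=\bigcup_{n\in\mathbb{N}} r_n^{-1}u(V)$.

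Since a countable union of meagre sets is meagre and $u(M)$ is not meagre by hypothesis, at least one set $r_{n_0}^{-1}u(V)$ must fail to be meagre. Scalar multiplication by the unit $r_{n_0}$ is a homeomorphism of $N$ onto itself, with continuous inverse given by multiplication by $r_{n_0}^{-1}$, and homeomorphisms carry meagre sets to meagre sets; therefore $u(V)$ itself is not meagre. In particular $u(V)$ is not nowhere dense, so the closed set $\overline{u(V)}$ has nonempty interior.

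The main obstacle is that non-meagreness only produces an interior point of $\overline{u(V)}$ somewhere, not necessarily at $0$, so the argument has to be recentred. To do this I would, given $V$, first choose a symmetric neighbourhood $W$ of $0$ with $W+W\subseteq V$ (possible by continuity of addition in $M$, intersecting with $-W$ to make it symmetric), and then run the preceding steps for $W$: this yields a nonempty open set $\Omega\subseteq\overline{u(W)}$. Now $\Omega-\Omega$ is a neighbourhood of $0$, since it contains the open neighbourhood $\Omega-y_0$ of $0$ for any chosen $y_0\in\Omega$. On the other hand, continuity of subtraction in $N$ gives $\overline{u(W)}-\overline{u(W)}\subseteq\overline{u(W)-u(W)}=\overline{u(W-W)}\subseteq\overline{u(V)}$, where I use additivity of $u$ to write $u(W)-u(W)=u(W-W)$ and the inclusion $W-W\subseteq V$. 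Combining these, $\Omega-\Omega\subseteq\overline{u(V)}$, so $\overline{u(V)}$ contains a neighbourhood of $0$ and is therefore itself a neighbourhood of $0$, as required.
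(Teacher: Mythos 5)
Your proposal is correct and takes essentially the same route as the paper: cover $M$ by the sets $r_n^{-1}W$ for a symmetric neighbourhood $W$ with $W+W\subseteq V$, transport non-meagreness of $u(M)$ through the homeomorphisms given by the units $r_n$ to obtain an interior point of $\overline{u(W)}$, and recentre at $0$. The only cosmetic difference is in the recentering and the meagreness bookkeeping: you argue via the $\Omega-\Omega$ trick and the fact that a countable union of meagre sets is meagre, while the paper passes to the closed sets $r_n^{-1}\overline{u(W)}$ and pairs the interior points $y$ and $-y$ using $\overline{u(W)}+\overline{u(W)}\subseteq\overline{u(V)}$ --- the two arguments are interchangeable.
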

\begin{proof}
Let $0 \in W \subseteq M$ be open with $W+W\subseteq V$, and assume that $-W=W$. By (\ref{conv.in.top.mod}) one has: 
\begin{equation*} \forall m \in M \ \exists \ l_m \in \mathbb{N} : r_{l_m}\cdot m \in W.
\end{equation*} 
So $m\in (r_{l_m}^{-1}W)$ and
\begin{equation*}
M=\displaystyle \bigcup_{n\in \mathbb{N}} (r_n^{-1} W).
\end{equation*}
Since $u$ is linear,
\begin{equation*}
u(M)=u(\displaystyle \bigcup_{n\in \mathbb{N}} (r_n^{-1} W))=\displaystyle \bigcup_{n\in \mathbb{N}} (r_n^{-1} u(W)) \subseteq \displaystyle \bigcup_{n\in \mathbb{N}} (r_n^{-1} \overline{u(W)}).
\end{equation*}
As $r_n \in R^*$, scalar multiplication with $r_n$ is a homeomorphism in $N$ $\forall \ n \in \mathbb{N}$. Thus  $r_n^{-1} \overline{ u(W)} \subseteq N$ is closed $ \forall n \in \mathbb{N}$. Since $u(M)$ is not meagre, it is not contained in a countable union of closed sets each of which has no interior point.
\noindent Thus $\exists \ l \in \mathbb{N} $ such that $r_l^{-1} \overline{u(W)}$ posses an interior point. But $r_l^{-1} \overline{u(W)}$ is homeomorphic to $\overline{u(W)}$ by scalar multiplication with $r_l$ and hence $ \overline{u(W)}$ posses an interior point $y$. Then $-y$ certainly is an interior point of $-\overline{u(W)} = \overline{-u(W)}=\overline{u(-W)}=\overline{u(W)}$. By (\ref{add of interp}) we get that $0=y-y$ is an interior point of $\overline{u(W)}+\overline{u(W)}$. But since addition in $N$ is continuous, $\overline{u(W)}+\overline{u(W)}$ is a subset of $\overline{u(W)+u(W)}$, which in turn is a subset of $\overline{u(V)}$. Hence $\overline{u(V)}$ is an neighbourhood of $0$ in $N$.
\end{proof} 
\medskip

\subsection{3) implies 4)}

\begin{prop}\label{closure of balls}
Let $M$ and $N$ be metric spaces.  Assume that $M$ is complete and consider a continuous map $u$: $M\rightarrow N$ having the following property: 
\begin{center}
For all $r\in \mathbb{R}^{>0}$ there exists  $\rho(r) \in \mathbb{R}^{>0}$ such that $B_{\rho(r)}(u(x)) \subseteq \overline{u(B_r(x))} \ \forall x\in M$.\\
\end{center}
Then one has $B_{\rho(r)}(u(x)) \subseteq u(B_a(x)) \ \forall \ a > r, \ x\in M.$
\end{prop}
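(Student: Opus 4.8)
The plan is to prove the inclusion by a successive approximation argument in the spirit of the classical Open Mapping Theorem. Fix $x \in M$ and an arbitrary point $y \in B_{\rho(r)}(u(x))$; it suffices to produce a single $z \in B_a(x)$ with $u(z)=y$, since then $y \in u(B_a(x))$ and, $y$ being arbitrary, the claimed inclusion follows. The idea is to build a sequence $(x_n)_{n\in\mathbb{N}}$ in $M$ starting at $x_0=x$, whose images $u(x_n)$ converge to $y$ while the steps $d(x_{n+1},x_n)$ are so small that the total displacement stays within $a$; completeness of $M$ then furnishes the limit $z$.

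To set up the iteration I would first fix auxiliary radii: put $r_0=r$ and choose positive reals $r_1,r_2,\dots$ with $\sum_{n\ge 1} r_n \le a-r$ (possible since $a>r$), for instance $r_n=(a-r)2^{-n}$. Starting from $x_0=x$, which satisfies $d(y,u(x_0))\le \rho(r_0)$ by the choice of $y$, I would construct $x_{n+1}$ from $x_n$ as follows. The induction hypothesis $d(y,u(x_n))\le\rho(r_n)$ means $y\in B_{\rho(r_n)}(u(x_n))$, and the standing assumption on $u$ gives $B_{\rho(r_n)}(u(x_n))\subseteq \overline{u(B_{r_n}(x_n))}$. Hence $y$ lies in the closure of $u(B_{r_n}(x_n))$, so there is a point $x_{n+1}\in B_{r_n}(x_n)$ whose image is as close to $y$ as we wish; I would pick it so that $d(y,u(x_{n+1}))\le \min\{\rho(r_{n+1}),\,2^{-n}\}$.

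This choice is the crux of the argument, and the step requiring the most care, because the bound must serve two purposes at once. The term $\rho(r_{n+1})$ keeps $y$ inside $B_{\rho(r_{n+1})}(u(x_{n+1}))$, which is exactly what is needed to re-apply the hypothesis at the next step, while the term $2^{-n}$ (any null sequence would do) forces $u(x_n)\to y$. Note that $\rho$ is an arbitrary function, so one may not assume $\rho(r_n)\to 0$; it is precisely the passage to the \emph{closure} that permits demanding an arbitrarily good approximation and thereby meeting both constraints simultaneously.

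Finally I would read off the conclusion. Since $d(x_{n+1},x_n)\le r_n$ and $\sum_n r_n<\infty$, the sequence $(x_n)$ is Cauchy, so by completeness of $M$ it converges to some $z\in M$. The triangle inequality gives $d(z,x)\le \sum_{n\ge 0} d(x_{n+1},x_n)\le \sum_{n\ge 0} r_n = r+\sum_{n\ge 1} r_n\le a$, whence $z\in B_a(x)$. Continuity of $u$ yields $u(x_n)\to u(z)$, while by construction $u(x_n)\to y$; as $N$ is a metric space, limits are unique, so $u(z)=y$. This exhibits $y\in u(B_a(x))$ and, since $y\in B_{\rho(r)}(u(x))$ and $x$ were arbitrary, completes the proof.
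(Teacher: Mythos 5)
Your proof is correct: the inductive choice of $x_{n+1}\in B_{r_n}(x_n)$ with $d(y,u(x_{n+1}))\le\min\{\rho(r_{n+1}),2^{-n}\}$ legitimately exploits the closure, the geometric radii give $d(z,x)\le r+\sum_{n\ge 1}(a-r)2^{-n}=a$, and completeness plus continuity close the argument. The paper itself gives no proof but defers to [BTVS] \S 3, Lemma 2, and your successive-approximation argument is precisely the standard proof of that cited lemma, so you have taken essentially the same route.
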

\begin{proof}
\ensuremath{}  [BTVS] §3, Lemma 2.
\end{proof} 

\begin{rem}\label{open lemma}
\emph{Let $u$: $G\rightarrow H$ be a group homomorphism of topological groups with neutral elements $e_G$ and $e_H$. Then $u$ is open if and only if for every neighbourhood  $V \subseteq G$ of $e_G$, $u(V)$ is a neighbourhood of $e_H$ in $H$.}
\end{rem}

\begin{prop}\label{3->4}
$3) \Rightarrow 4)$: Let $M$ and $N$ be topological Hausdorff groups that have countable fundamental systems of open neighbourhoods of $0$. Assume that $M$ is complete. Consider a continuous group homomorphism $u$: $M\rightarrow N$ such that for all neighbourhoods $V$ of $0$ in $M$, $\overline{u(V)}$ is a neighbourhood of $0 $ in $ N$. Then $u$: $M\rightarrow N$ is open.
\end{prop}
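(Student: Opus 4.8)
The plan is to deduce everything from Proposition \ref{closure of balls}, after equipping both groups with invariant metrics. First I would invoke Remark \ref{open lemma}: it suffices to show that $u(V)$ is a neighbourhood of $0$ in $N$ for every neighbourhood $V$ of $0$ in $M$. To set up the metric machinery, I would use Proposition \ref{metrisable} to choose a right-invariant metric $d_M$ on $M$ and a right-invariant metric $d_N$ on $N$, each inducing the given topology, so that the closed balls $B_r(0)$ form a fundamental system of neighbourhoods of $0$ in each group. Because right-invariance gives $d_M(x_m,x_k)=d_M(x_m\,x_k^{-1},0)$, a sequence is Cauchy for $d_M$ precisely when it is a Cauchy sequence in the sense of Definition \ref{complete}; since $M$ is complete as a topological group, $(M,d_M)$ is a complete metric space, which is exactly the completeness hypothesis of Proposition \ref{closure of balls}.

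The heart of the argument is to verify the hypothesis of Proposition \ref{closure of balls}, namely that for each $r>0$ there is a single $\rho(r)>0$ with $B_{\rho(r)}(u(x))\subseteq\overline{u(B_r(x))}$ for all $x\in M$. Hypothesis 3) only controls the situation at the origin, so I would use invariance to propagate it. Right-invariance makes every ball a translate of the ball at the neutral element, $B_r(x)=B_r(0)\,x$ in $M$ and $B_s(u(x))=B_s(0)\,u(x)$ in $N$; since $u$ is a homomorphism and right-translation is a homeomorphism (hence commutes with closure), one gets $\overline{u(B_r(x))}=\overline{u(B_r(0))\,u(x)}=\overline{u(B_r(0))}\,u(x)$. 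Applying 3) to the neighbourhood $B_r(0)$ of $0$ in $M$ shows $\overline{u(B_r(0))}$ is a neighbourhood of $0$ in $N$, so it contains some ball $B_{\rho(r)}(0)$ with $\rho(r)$ depending only on $r$. Translating by $u(x)$ then yields $B_{\rho(r)}(u(x))=B_{\rho(r)}(0)\,u(x)\subseteq\overline{u(B_r(0))}\,u(x)=\overline{u(B_r(x))}$, which is the required inclusion, uniform in $x$.

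With the hypothesis verified, Proposition \ref{closure of balls} gives $B_{\rho(r)}(u(x))\subseteq u(B_a(x))$ for all $a>r$ and all $x\in M$. Specialising to $x=0$ (so $u(x)=0$ as $u$ is a homomorphism) produces $B_{\rho(r)}(0)\subseteq u(B_a(0))$ for every $a>r$, so $u(B_a(0))$ is a neighbourhood of $0$ in $N$. Finally, given an arbitrary neighbourhood $V$ of $0$ in $M$, I would pick $a>0$ with $B_a(0)\subseteq V$ and any $r\in(0,a)$; then $u(V)\supseteq u(B_a(0))\supseteq B_{\rho(r)}(0)$, so $u(V)$ is a neighbourhood of $0$, and $u$ is open by Remark \ref{open lemma}.

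The main obstacle is exactly the homogenisation step in the second paragraph: Proposition \ref{closure of balls} demands the ball inclusion at \emph{every} point $x$ with one radius function $\rho$, while 3) speaks only about $0$. Everything hinges on choosing the metrics right-invariantly so that balls and their images are genuine translates, letting the origin estimate propagate with the same $\rho(r)$; the one technical point to check carefully is the compatibility of group-completeness (Definition \ref{complete}) with completeness of the chosen right-invariant metric.
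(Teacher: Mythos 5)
Your proof is correct and takes essentially the same route as the paper's: choose right-invariant metrics via Proposition \ref{metrisable}, use right-invariance to translate the origin estimate from hypothesis 3) into the uniform inclusion $B_{\rho(r)}(u(x))\subseteq\overline{u(B_r(x))}$ required by Proposition \ref{closure of balls}, and conclude via Remark \ref{open lemma}. Your explicit check that group-completeness in the sense of Definition \ref{complete} coincides with completeness of the right-invariant metric is a detail the paper leaves implicit, but otherwise the two arguments match step for step.
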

\begin{proof}
 For each topological group $M$ and $N$ consider a metric that is right-invariant and induces its topology (c.f. (\ref{metrisable})). By hypothesis, $\overline{u(B_r(0))}$ is a neighbourhood of $0\in N \ \forall r>0$. So $\forall r>0$ there exists $\rho(r)>0$ such that $B_{\rho(r)}(0)\subseteq \overline{u(B_r(0))}$. Let $(x,r)$ be arbitrary in $M\times\mathbb{R}^{>0}$.
\begin{gather*}
\overline{u(B_r(x))}=\overline{u(B_r(0)\cdot x)}=\overline{u(B_r(0))\cdot u(x)}\\ = \overline{u(B_r(0))} \cdot u(x) \supseteq B_{\rho(r)}(0) \cdot u(x)= B_{\rho(r)}(u(x)).
\end{gather*}

\noindent So we can apply (\ref{closure of balls}) and particularly get $B_{\rho(r)}(0)\subseteq u(B_a(0)) \ \forall r<a$. This implies that for all neighbourhoods $V$ of $0$ in $M$, $u(V)$ is a neighbourhood of $0$ in $N$, and thus $u$ is open by (\ref{open lemma}).
\end{proof}

\subsection{4) implies 1)}

\begin{lem}\label{M=N}
Let $R$ be a topological ring that has a unit in each neighbourhood of $0$. Let M be a topological $R$-module, and let $N \subseteq M  $ be a submodule that contains a nonempty open subset $U$ of $M$.  Then one has $N=M$.
\end{lem}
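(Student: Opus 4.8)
The plan is to exploit the fact that a submodule containing a nonempty open set must, after a suitable translation, contain an open neighbourhood of $0$, and then to use the abundance of units near $0$ to spread this neighbourhood across all of $M$ via scalar multiplication. The decisive feature of the hypothesis is that the ring elements we find close to $0$ are \emph{units}, hence invertible.

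First I would reduce to the situation in which $N$ itself contains an open neighbourhood of $0$. Pick any point $x_0\in U$; since $U\subseteq N$ we have $x_0\in N$. Because translation by $-x_0$ is a homeomorphism of $M$, the set $W:=U-x_0$ is open and contains $0$. Moreover, as $N$ is a submodule it is closed under subtraction, so $W=U-x_0\subseteq N$. Thus $N$ contains the open neighbourhood $W$ of $0$.

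Next, fix an arbitrary $m\in M$; the aim is to show $m\in N$. The scalar multiplication of $M$ restricts to a continuous map $R\to M$, $r\mapsto r\cdot m$, which sends $0$ to $0$, so the preimage of $W$ is an open neighbourhood of $0$ in $R$. (Equivalently, one may simply invoke (\ref{conv.in.top.mod}) with the family of all units of $R$, whose defining condition is exactly the hypothesis that every neighbourhood of $0$ contains a unit.) Either way, there is a unit $r\in R^*$ with $r\cdot m\in W\subseteq N$.

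Finally, since $r\in R^*$ its inverse $r^{-1}$ lies in $R$, and because $N$ is a submodule we obtain $m=r^{-1}\cdot(r\cdot m)\in N$. As $m\in M$ was arbitrary this yields $M\subseteq N$, and hence $N=M$. The proof is short, and the only step that genuinely uses the hypothesis—and thus the only possible obstacle—is the selection of $r$: it is essential that the element with $r\cdot m\in W$ can be taken to be a unit, for otherwise it could not be inverted to recover $m$ inside the submodule.
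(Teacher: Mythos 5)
Your proposal is correct and follows essentially the same route as the paper: translate so that $N$ contains a neighbourhood $W$ of $0$, use (\ref{conv.in.top.mod}) (applied to the family of units) to find $r\in R^*$ with $r\cdot m\in W$, and invert to get $m=r^{-1}\cdot(r\cdot m)\in N$. Your explicit justification that $W=U-x_0\subseteq N$ (via $x_0\in N$ and closure of $N$ under subtraction) merely spells out the paper's terse ``by translation we can assume $0\in U$''.
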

\begin{proof}
By translation we can assume that $0\in U$.
Let $m \in M$ be arbitrary. By (\ref{conv.in.top.mod}) there is an $r\in R^{*}$ with $r\cdot m \ \in U \subseteq N$.  Since $N$ is closed under scalar multiplication, $m=r^{-1}\cdot r \cdot m$ is in $N$. Therefore we have $N=M$.
\end{proof}
\medskip
\begin{prop}\label{quotient complete}
Let $G$ be a metrisable group, and let $H \subseteq G$ be a closed subgroup. Then $G/H$ is metrisable. If $G$ is complete, then $G/H$ is complete.
\end{prop}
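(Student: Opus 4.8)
The plan is to treat the two assertions separately: metrisability reduces immediately to Proposition \ref{metrisable}, while completeness is obtained by lifting a (rapidly convergent) Cauchy sequence from $G/H$ back to $G$ and exploiting completeness there.

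First I would record the standard facts about the canonical map $p\colon G\to G/H$, $p(x)=xH$. It is continuous by definition of the quotient topology, and it is open because the saturation $p^{-1}(p(U))=UH=\bigcup_{h\in H}Uh$ of an open set $U$ is a union of right-translates of $U$, hence open. Since $H$ is closed, each coset $yH$ is closed and $G/H$ is Hausdorff. If $(V_n)_n$ is a countable fundamental system of open neighbourhoods of $e$ in $G$, then openness and continuity of $p$ make $(p(V_n))_n$ such a system at the neutral coset. Thus $G/H$ is Hausdorff with a countable fundamental system of neighbourhoods of its neutral element, so it is metrisable by Proposition \ref{metrisable}; this settles the first claim.

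For completeness I would fix a right-invariant metric $d$ on $G$ inducing its topology (Proposition \ref{metrisable}) and introduce the quotient metric
\begin{equation*}
\bar d(p(x),p(y)) := \inf_{h\in H} d(x,yh).
\end{equation*}
Right-invariance shows that $\bar d$ is well defined on cosets, symmetric, and subadditive, while $\bar d(p(x),p(y))=0$ forces $x\in\overline{yH}=yH$ (this is exactly where $H$ closed enters), so $\bar d$ is a genuine metric; that it induces the quotient topology is the one technical point, which I would cite from [BGT] IX, §3.1. The computation I really need is the compatibility identity, again a consequence of right-invariance: for a \emph{fixed} representative $x$ of a coset one has $\inf_{b\in p(y)} d(x,b)=\bar d(p(x),p(y))$, i.e. nearby cosets always admit representatives that are correspondingly close in $G$.

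Assuming $G$ complete, let $(\xi_n)_n$ be a Cauchy sequence in $G/H$. Since a Cauchy sequence converges as soon as one of its subsequences does, I may pass to a subsequence with $\bar d(\xi_n,\xi_{n+1})<2^{-n}$. Choosing any $x_0$ with $p(x_0)=\xi_0$ and, given $x_n$, using the compatibility identity to select a representative $x_{n+1}$ of $\xi_{n+1}$ with $d(x_n,x_{n+1})<2^{-n}$, produces a $d$-Cauchy sequence $(x_n)_n$; as $d$ is right-invariant this is a Cauchy sequence in the sense of Definition \ref{complete}, so completeness of $G$ gives $x_n\to x$ for some $x\in G$, whence $\xi_n=p(x_n)\to p(x)$ by continuity of $p$, and the original sequence converges. (When $H$ is normal, the case relevant for the module applications, $G/H$ is a topological group and, $\bar d$ being right-invariant, metric completeness coincides with Definition \ref{complete}.) I expect the genuine obstacle to be the verification that $\bar d$ induces the quotient topology rather than a strictly finer one; once the identity $\inf_{b\in p(y)}d(x,b)=\bar d(p(x),p(y))$ is available, the lifting step itself is routine.
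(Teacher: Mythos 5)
Your proof is correct, and it is essentially the argument behind the paper's proof: the paper simply cites [BGT] IX, \S 3.1, Proposition 4 (adding that $H$ closed is equivalent to $G/H$ Hausdorff), and what you have written out --- the quotient metric $\bar d(p(x),p(y))=\inf_{h\in H}d(x,yh)$ built from a right-invariant metric, its well-definedness and separation from closedness of $H$, and the lifting of a rapidly Cauchy sequence representative by representative using $\inf_{b\in p(y)}d(x,b)=\bar d(p(x),p(y))$ --- is precisely Bourbaki's proof of that proposition. One small wrinkle: in your first paragraph you invoke Proposition \ref{metrisable} to conclude metrisability of $G/H$, but that proposition is stated for topological \emph{groups}, and $G/H$ is only a homogeneous space when $H$ is not normal (the proposition here is stated for an arbitrary closed subgroup). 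This does no harm, since your quotient metric $\bar d$, once known to induce the quotient topology, gives metrisability directly and makes the appeal to Proposition \ref{metrisable} redundant; likewise, for non-normal $H$ ``complete'' must be read as metric completeness of $(G/H,\bar d)$, which is exactly what your lifting argument proves, and your parenthetical remark correctly identifies that in the paper's applications (kernels of module maps, hence normal subgroups of abelian groups) this agrees with Definition \ref{complete}. Also note that the one point you defer to [BGT] --- that $\bar d$ induces the quotient topology --- is in fact easy from what you already have: $p$ maps the open $d$-ball of radius $r$ about $x$ onto the open $\bar d$-ball of radius $r$ about $p(x)$, by the same representative-selection identity you use in the lifting step.
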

\begin{proof}
\ensuremath{} [BGT] IX, §3.1, Proposition 4 with the addition that $H\subseteq G$ is a closed subgroup if and only if $G/H$ is Hausdorff.
\end{proof} 
\medskip
\begin{prop}\label{4->1}
$4) \Rightarrow 1):$ Let $R$ be a topological ring that has a unit in each neighbourhood of $0$. Let $M$ and $N$ be topological Hausdorff $R$-modules such that $M$ is complete and has a countable fundamental system of open neighbourhoods of $0$. Let $u$: $M\rightarrow N$ be a linear, continuous and open map. Then $u$ is surjective, and $N$ has a countable fundamental system of open neighbourhoods of $0$ and is complete.
\end{prop}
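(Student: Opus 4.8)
The plan is to establish the three assertions—surjectivity of $u$, the existence of a countable fundamental system of open neighbourhoods of $0$ in $N$, and completeness of $N$—in turn, with openness of $u$ doing the essential work at each stage.

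First, for surjectivity, I would observe that $u(M)$ is a submodule of $N$ (being the image of an $R$-linear map) and that it is open in $N$: since $M$ is open in itself and $u$ is open, $u(M)$ is a nonempty open subset of $N$ lying inside the submodule $u(M)$. Applying Lemma~\ref{M=N} with ambient module $N$ and submodule $u(M)$—the hypothesis that $R$ has a unit in each neighbourhood of $0$ is exactly what is assumed here—then yields $u(M)=N$. Next, for the fundamental system, let $(V_n)_{n\in\mathbb{N}}$ be a countable fundamental system of open neighbourhoods of $0$ in $M$. Since $u$ is open and $u(0)=0$, each $u(V_n)$ is an open neighbourhood of $0$ in $N$, and I claim these form a fundamental system: given any neighbourhood $W$ of $0$ in $N$, continuity of $u$ makes $u^{-1}(W)$ a neighbourhood of $0$ in $M$, so some $V_n\subseteq u^{-1}(W)$, whence $u(V_n)\subseteq W$.

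Third, for completeness, I would factor $u$ through the quotient by its kernel. As $N$ is Hausdorff and $u$ is continuous, $K:=\ker u=u^{-1}(\{0\})$ is a closed subgroup of the additive group of $M$, and $u$ induces a bijective homomorphism $\bar{u}\colon M/K\to N$ with $u=\bar{u}\circ q$, where $q$ denotes the quotient map. The map $\bar{u}$ is continuous by the universal property of the quotient topology, and it is open because for open $O\subseteq M/K$ one has $\bar{u}(O)=u(q^{-1}(O))$, which is open since $q^{-1}(O)$ is open and $u$ is open; hence $\bar{u}$ is a topological isomorphism. By Corollary~\ref{complete together}, $M$ is completely metrisable, so Proposition~\ref{quotient complete} gives that $M/K$ is complete (and metrisable), and transporting along the isomorphism $\bar{u}$ shows that $N$ is complete.

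The only genuinely non-formal point is the completeness assertion, and there the crux is verifying that the induced map $\bar{u}$ is open. This is precisely where openness of $u$—rather than mere surjectivity—is indispensable, as it is what allows $N$ to be identified topologically with the quotient $M/K$ so that Proposition~\ref{quotient complete} can be invoked; the surjectivity and the fundamental-system statements, by contrast, are essentially formal consequences of openness and continuity together with Lemma~\ref{M=N}.
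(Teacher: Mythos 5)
Your proof is correct and follows essentially the same route as the paper: surjectivity via Lemma~\ref{M=N} applied to the open submodule $u(M)$, then factoring $u$ through $M/\ker(u)$ and invoking Proposition~\ref{quotient complete}. You merely fill in details the paper leaves implicit (closedness of the kernel, continuity and openness of $\bar{u}$, and a direct verification of the countable fundamental system, which the paper instead gets from metrisability of $M/K$).
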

\begin{proof}
The image of $u$ is an open submodule of $N$. Thus $u$ is surjective by (\ref{M=N}). Define $K:=\ker(u)$ and let $\pi$: $M\rightarrow M/K$ be the projection. $u$ induces an isomorphism of topological $R$-modules $\overline{u}$: $M/K \rightarrow N$ with $u=\overline{u}\circ \pi$. So the claim follows with (\ref{quotient complete}).
\end{proof} 

\subsection{Corollaries}

\noindent In this subsection we state two corollaries of the Open Mapping Theorem:

\begin{cor}\label{main.cor}
Consider the situation above. Assume that $N$ is complete, and that $u$: $M\rightarrow N$ is bijective. Then $u$ is an isomorphism of topological $R$-modules.
\end{cor}

\begin{theo}\label{lemma cont map}
Let $R$ be a complete topological Hausdorff ring that has a sequence of units converging to $0$ and a countable fundamental system of open neighbourhoods of $0$. Let $M$ and $N$ be topological $R$-modules such that $M$ is  finitely generated, Hausdorff, complete and has a countable fundamental system of open neighbourhoods of $0$. Then each linear map $f$: $M\rightarrow N$ is continuous.
\end{theo}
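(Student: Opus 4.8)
The plan is to realise $M$ as a topological quotient of a free module $R^n$ and then to exploit the universal property of quotient maps. Since $M$ is finitely generated, I would choose generators $e_1,\dots,e_n$ and consider the $R$-linear map $\phi\colon R^n\to M$, $(a_1,\dots,a_n)\mapsto \sum_{i=1}^n a_i e_i$. Equipping $R^n$ with the product topology makes it a topological $R$-module, and because $R$ is Hausdorff, complete and has a countable fundamental system of open neighbourhoods of $0$, the finite product $R^n$ inherits all three properties (a sequence in $R^n$ is Cauchy, respectively convergent, if and only if each of its coordinates is, and a finite product of Hausdorff spaces with countable neighbourhood bases has the same features). Moreover $\phi$ is continuous: each coordinate map $a_i\mapsto a_i e_i$ is continuous by continuity of the scalar multiplication of $M$, and $\phi$ is a finite sum of such maps precomposed with the continuous projections $R^n\to R$, so continuity follows from continuity of addition in $M$. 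Finally $\phi$ is surjective, since the $e_i$ generate $M$.

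Next I would apply the Open Mapping Theorem (\ref{setting}) to $\phi\colon R^n\to M$. All hypotheses are met: $R$ carries a zero sequence of units, both $R^n$ and $M$ are topological Hausdorff $R$-modules with countable fundamental systems of open neighbourhoods of $0$, the source $R^n$ is complete, and $\phi$ is continuous and $R$-linear. Property 1) of (\ref{setting}) holds, for $M$ is complete by hypothesis and $\phi$ is surjective; hence property 4) holds, i.e. $\phi$ is open. A continuous, open, surjective map is a quotient map, so $M$ carries the final topology with respect to $\phi$: a subset $U\subseteq M$ is open if and only if $\phi^{-1}(U)$ is open in $R^n$.

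It then remains to observe that $f\colon M\to N$ is continuous if and only if the composite $f\circ\phi\colon R^n\to N$ is continuous, which is precisely the universal property of the quotient map $\phi$. But $f\circ\phi$ is the $R$-linear map $(a_1,\dots,a_n)\mapsto \sum_{i=1}^n a_i f(e_i)$, and the very same argument as for $\phi$ above --- continuity of scalar multiplication and of addition, now in $N$ --- shows it to be continuous. Therefore $f$ is continuous.

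The only point requiring care, and the heart of the argument, is the application of (\ref{setting}): one must verify that $R^n$ with the product topology is complete and Hausdorff and admits a countable fundamental system of neighbourhoods of $0$, so that the Open Mapping Theorem indeed applies and forces $\phi$ to be open. Once openness is in hand the remainder is formal, since continuity of a map out of $M$ reduces to continuity of a linear map out of the free module $R^n$, where it is immediate from the module axioms. Note that no completeness or countability hypothesis on $N$ is used --- consistent with the statement --- because the Open Mapping Theorem is invoked only for the pair $(R^n, M)$.
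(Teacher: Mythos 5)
Your proposal is correct and takes essentially the same route as the paper's proof: both realise $M$ as the image of the continuous linear surjection $\pi\colon R^n\to M$ built from a finite generating set, apply Theorem (\ref{setting}) to conclude that $\pi$ is open, and then deduce continuity of $f$ from that of $f\circ\pi$ --- your appeal to the universal property of the quotient map is just an abstract restatement of the paper's identity $f^{-1}(U)=\pi\bigl(\bar{f}^{-1}(U)\bigr)$. Your explicit verification that $R^n$ with the product topology is Hausdorff, complete and has a countable fundamental system of neighbourhoods of $0$ is a detail the paper leaves implicit, but it changes nothing in the structure of the argument.
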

\begin{proof}
Let $\lbrace m_1, \ldots, m_n \rbrace$ be a generating system of the $R$-module $M$. Consider the linear surjection $\pi$: $R^n \rightarrow M, (r_1,\ldots,r_n)\mapsto \sum_{i=1}^n r_i\cdot m_i$. Then define $\bar{f}$: $R^n\rightarrow N, $ via $ \bar{f}:=f\circ \pi$. As scalar multiplication and addition are continuous in $M$ and $N$, $\pi$ and $\bar{f}$ are continuous. Applying Theorem (\ref{setting}) we see that $\pi$ is open. Let $U\subseteq N$ be open. Then
\begin{gather*}
f^{-1}(U)=\pi (\pi ^{-1}(f^{-1}(U)))=\pi(\bar{f}^{-1}(U))\subseteq M \ \text{is open}.
\end{gather*} 
\end{proof}

\section{Applications}

{As announced in the introduction, as from now all rings are \textbf{commutative}.\\

\noindent In this section we want to make use of the previous results. At first we will apply Theorem (\ref{setting}) to prove that over certain topological rings a complete module is noetherian if and only if each submodule is closed. We will use this later on to show that for such a ring $R$, if $R$ is noetherian, there exists a unique topological $R$-module structure on each finitely generated $R$-module $M$ such that $M$ is Hausdorff, complete and has a countable fundamental system of open neighbourhoods $0$ (i.e. $M$ is complete and metrisable). The proofs in this section are generalizations of arguments of [BGR]. \\

\subsection{Topologically nilpotent elements}

\begin{defn}\label{def top.nilpot.}
\medskip
\emph{Let $R$ be a topological ring. \begin{enumerate}
\item
$r\in R$ is said to be \emph{topologically nilpotent} if the sequence $(r^n)_{n\in \mathbb{N}}$ converges to $0$. Define $R^{oo}:=\lbrace r\in R \ | \ r \ \text{is topologically nilpotent}\rbrace $. 
\item $B\subseteq R$ is called \emph{bounded} if for every neighbourhood $U$ of $0$ in $R$ there is an open neighbourhood $V$ of $0$ in $R$ with $V\cdot B\subseteq U$.
\item $R$ is said to be \emph{non-archimedean} if $R$ has a fundamental system of neighbourhoods of $0$ consisting of subgroups of $(R,+)$.
\end{enumerate} 
}
\end{defn}
\medskip

\begin{rem}\label{endliche mengen beschränkt}
\emph{Let $R$ be a topological ring.
\begin{enumerate}
\item Each finite subset of $R$ is bounded.
\item For $a\in R^{oo}$ the set $X:=\lbrace a^n \ | \ n\in \mathbb{N}_0 \rbrace$ is bounded.
\item $R^{oo}$ is closed under ring multiplication.
\end{enumerate}}
\end{rem}

\medskip

\begin{lem}\label{R^{oo} is untergruppe}
Let $R$ be a non-archimedean ring. Then $R^{oo}$ is a subgroup of $(R,+)$.
\end{lem}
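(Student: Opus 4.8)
The plan is to verify the three additive-subgroup axioms for $R^{oo}$: that it contains $0$, is closed under negation, and is closed under addition. The first is immediate, since $0^n = 0 \to 0$. For negation, fix $a \in R^{oo}$ and an arbitrary neighbourhood $U$ of $0$; using that $R$ is non-archimedean I would assume $U$ is a subgroup of $(R,+)$, hence symmetric, so that $a^n \in U$ forces $(-a)^n = (-1)^n a^n \in U$. This gives $(-a)^n \to 0$, i.e. $-a \in R^{oo}$. (Alternatively, this follows at once from continuity of $x \mapsto -x$, and does not even need the non-archimedean hypothesis.)

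The heart of the matter is closure under addition, and here I would use commutativity of $R$ (in force throughout this section) to expand
\[
(a+b)^n = \sum_{k=0}^n \binom{n}{k}\, a^k b^{n-k}.
\]
Fix $a,b \in R^{oo}$ and a neighbourhood $U$ of $0$, and by non-archimedeanness take $U$ to be a subgroup of $(R,+)$. The idea is to show that, for $n$ large, every monomial $a^k b^{n-k}$ already lies in $U$. To control these I would invoke the boundedness of the sets $\lbrace a^m \mid m \in \mathbb{N}_0 \rbrace$ and $\lbrace b^m \mid m \in \mathbb{N}_0 \rbrace$ from (\ref{endliche mengen beschränkt}). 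Boundedness of $\lbrace b^m \rbrace$ produces an open neighbourhood $V$ of $0$ with $V\cdot b^{m} \subseteq U$ for all $m$; since $a^k \to 0$ there is a $K$ with $a^k \in V$ for $k \geq K$, whence $a^k b^{n-k} \in U$ whenever $k \geq K$. Symmetrically, boundedness of $\lbrace a^m \rbrace$ together with $b^{n-k} \to 0$ yields an $L$ with $a^k b^{n-k} \in U$ whenever $n-k \geq L$.

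Combining these, for $n \geq K+L$ each index $k \in \lbrace 0, \ldots, n \rbrace$ satisfies $k \geq K$ or $n-k \geq L$, so every monomial $a^k b^{n-k}$ lies in $U$. Because $U$ is a subgroup, the integer multiple $\binom{n}{k} a^k b^{n-k}$ again lies in $U$, and therefore so does the finite sum $(a+b)^n$. Hence $(a+b)^n \to 0$, i.e. $a+b \in R^{oo}$, completing the verification. The main obstacle is precisely this addition step: the binomial expansion introduces both large integer coefficients $\binom{n}{k}$ and a number of summands growing with $n$, and it is exactly the non-archimedean hypothesis — which lets us reduce to subgroup neighbourhoods — that tames both difficulties simultaneously, while the two boundedness statements of (\ref{endliche mengen beschränkt}) are what force the cross terms into $U$.
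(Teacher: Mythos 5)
Your proof is correct and follows essentially the same route as the paper's: binomial expansion of $(a+b)^n$, the boundedness of $\lbrace a^m\rbrace$ and $\lbrace b^m\rbrace$ from (\ref{endliche mengen beschränkt}) to produce absorbing neighbourhoods, and the dichotomy $k\geq K$ or $n-k\geq L$ to force every monomial into a subgroup neighbourhood $U$. You are in fact slightly more complete than the paper, which tacitly omits the trivial verifications that $0\in R^{oo}$ and that $R^{oo}$ is closed under negation, and which only spells out closure under addition.
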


\begin{proof}
Let $a,b\in R^{oo}$ and $U$ be a neighbourhood of $0$ that is a subgroup of $(R,+)$. Since $\lbrace a^n \ | \ n\in \mathbb{N}_0 \rbrace$ and $\lbrace b^n \ | \ n\in \mathbb{N}_0 \rbrace$ are bounded, we can find open neighbourhoods $V_1$ and $V_2$ of $0$ with $a^{i} \cdot V_1\subseteq U$ and $b^{j}\cdot V_2\subseteq U$ for all $i,j\in \mathbb{N}_0$. Define $V:=V_1\cap V_2$. Since $a$ and $b$ are topologically nilpotent, we can find $N,M\in \mathbb{N}$ with $a^n, \ b^m\in V$ for all $n\geq N$, $m\geq M$. One has:
 \begin{gather*}
 (a+b)^{N+M}=  \sum_{j=0}^{N+M} {N+M \choose j} a^{N+M-j}\cdot b^j.
 \end{gather*}
 Since $U$ is additively closed, it is sufficient to show that $a^{k-j}\cdot b^{j}\in U$ for all $k\geq N+M$, $j=0,\ldots, k$. If $k-j\geq N$, then $a^{k-j}\cdot b^{j} \in V\cdot b^{j} \subseteq U$. Conversely, $k-j<N$ implies $j>M$. Hence $a^{k-j}\cdot b^{j} \in a^{k-j}\cdot V \subseteq U$.
\end{proof} 

\begin{rem}\label{properties of top nilpotent elements}
\emph{Let $R$ be a non-archimedean ring.
\begin{enumerate}
\item  $\sum_{i=0}^{\infty} r^{i}$ is a Cauchy sequence in $R$ for all $r\in R^{oo}$.
\end{enumerate}   
If $R$ is Hausdorff, complete and possesses a countable fundamental system of open neighbourhoods of $0$, one has:
\begin{enumerate}
\item[ii)] $1-r\in R^{\ast}$ for all $r\in R^{oo}$.
\end{enumerate}}
\end{rem}
\medskip
\begin{rem}\label{abzählbare umgebungsbasis überflüssig}
\emph{Note that in the situation above we assume a countable fundamental system of open neighbourhoods of $0$ to be able to work with the definition of complete topological groups as it is given in (\ref{complete}). If one defines completeness for topological groups in general, an analogous proposition holds as well. In any case we will need the countable fundamental system to apply Theorem (\ref{lemma cont map}) in the following propositions.}
\end{rem}
\medskip

\begin{lem}\label{lemma M=N+A => M=N}
Let $R$ be a complete non-archimedean topological Hausdorff ring that has a countable fundamental system of neighbourhoods of $0$. Let $M$ be an $R$-module, and let $N\subseteq M$ be a submodule such that there are $m_1,\ldots, m_l \in M$ with $M=N+\sum^{l}_{i=1} (R^{oo} \cdot m_i)$. Then one has $N=M$.
\end{lem}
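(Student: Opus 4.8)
The plan is to pass to the quotient module $\bar M := M/N$ and to reduce the whole statement to showing $\bar M = 0$. Writing $\bar m_i$ for the image of $m_i$, the hypothesis $M = N + \sum_{i=1}^l R^{oo}\cdot m_i$ translates into $\bar M = \sum_{i=1}^l R^{oo}\cdot \bar m_i$. In particular the finitely many elements $\bar m_1,\ldots,\bar m_l$ generate $\bar M$ as an $R$-module (since $\sum_i R^{oo}\bar m_i \subseteq \sum_i R\bar m_i \subseteq \bar M$), and moreover each generator itself lies in $\sum_i R^{oo}\cdot \bar m_i$. Hence for every $j$ I would record a relation $\bar m_j = \sum_{i=1}^l s_{ji}\cdot \bar m_i$ with coefficients $s_{ji}\in R^{oo}$.

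Next I would collect these relations into the matrix $S := (s_{ji})_{1\le i,j\le l}$, whose entries all lie in $R^{oo}$, and the column vector $\bar m := (\bar m_1,\ldots,\bar m_l)^{T}$. The relations then read $(I_l - S)\cdot \bar m = 0$, where $I_l$ is the $l\times l$ identity matrix. Since the rings in this section are commutative, I may multiply on the left by the adjugate matrix of $I_l - S$ and use $\mathrm{adj}(A)\cdot A = \det(A)\cdot I_l$ to obtain $\det(I_l - S)\cdot \bar m_j = 0$ for every $j$. If I can show that $\det(I_l - S)$ is a unit of $R$, then each $\bar m_j = 0$, so $\bar M = 0$ and therefore $N = M$.

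The crux will therefore be to identify $\det(I_l - S)$ as a unit. Using the characteristic-polynomial (principal-minor) expansion I would write $\det(I_l - S) = \sum_{k=0}^l (-1)^k p_k$, where $p_0 = 1$ and, for $k\ge 1$, $p_k$ is the sum of the $k\times k$ principal minors of $S$. Each such minor is a sum of products of $k$ entries of $S$, and every entry lies in $R^{oo}$. By Remark (\ref{endliche mengen beschränkt}) the set $R^{oo}$ is closed under multiplication, so each of these products stays in $R^{oo}$; by Lemma (\ref{R^{oo} is untergruppe}) it is an additive subgroup, so $p_k\in R^{oo}$ for every $k\ge 1$ and consequently $r := -\sum_{k=1}^l(-1)^k p_k \in R^{oo}$. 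Thus $\det(I_l - S) = 1 - r$ with $r\in R^{oo}$, and Remark (\ref{properties of top nilpotent elements}) gives $1-r\in R^{\ast}$, completing the argument.

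The main obstacle — and the reason a direct appeal to the ordinary Nakayama lemma is unavailable — is that $R^{oo}$ need not be an ideal of $R$: it is closed under addition and under multiplication of its own elements, but multiplication by an arbitrary element of $R$ may leave it. The determinant trick is precisely what circumvents this, because $\det(I_l - S) - 1$ is assembled only from products of the entries of $S$, all of which remain inside $R^{oo}$ by its two closure properties, so no ideal structure is ever needed. The only genuinely topological input is the invertibility of $1-r$ for topologically nilpotent $r$, which is where completeness and the non-archimedean hypothesis enter through Remark (\ref{properties of top nilpotent elements}).
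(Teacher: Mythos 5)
Your proposal is correct and takes essentially the same route as the paper's own proof: both hinge on expressing the generators through a matrix with entries in $R^{oo}$, using Lemma (\ref{R^{oo} is untergruppe}) together with closure of $R^{oo}$ under multiplication to see that $\det(I_l-S)=1-r$ with $r\in R^{oo}$, and invoking Remark (\ref{properties of top nilpotent elements}) for the invertibility of $1-r$. Your only deviations are cosmetic: you pass to the quotient $M/N$ and kill the $\bar m_j$ via the adjugate identity, whereas the paper stays in $M$, writes $n=(I_l-A)\cdot m$, and inverts $I_l-A$ to conclude $m_i\in N$.
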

\begin{proof}
 We want to show that $m_i \in N \ \forall \ i=1,\ldots ,l$. We can write 
\begin{gather*}
m_i=n_i+\sum^{l}_{j=1} a_{ij}m_j \  \  n_i \in N, a_{ij}\in R^{oo}; \forall \ i=1,\ldots ,l.
\end{gather*}
I.e. in matrix form:
\begin{gather*}
\begin{pmatrix} 
   m_1\\ 
   \vdots\\
   m_l 
\end{pmatrix}
=
\begin{pmatrix}
n_1\\
\vdots\\
n_l
\end{pmatrix}
+
\begin{pmatrix}
a_{11}& \ldots & a_{1l}\\
\vdots& \ddots & \vdots\\
a_{l1}& \ldots & a_{ll}
\end{pmatrix}
\cdot
\begin{pmatrix}
m_1\\
\vdots\\
m_l
\end{pmatrix}.
\end{gather*}
\begin{gather*}
\Leftrightarrow m=n+A\cdot m \Leftrightarrow n=(I_l-A)\cdot m
\end{gather*}
Since $N$ is closed under scalar multiplication and addition, it is sufficient to show that $(I_l-A)^{-1}$ exists, i.e. $det((I_l-A))\in R^{*}$. $R^{oo}$ is a subgroup of $(R,+)$ and closed under ring multiplication. Hence one checks by applying the Leibniz formula that $det((I_l-A))$ is of the form $1-a$ for $a\in R^{oo}$. Therefore $det((I_l-A))\in R^{*}$ by (\ref{properties of top nilpotent elements}).
\end{proof}
\medskip

\noindent Recall the following results on the completion of metric spaces:
\begin{rem}\label{completion of metric modules}
\emph{Let $R$ be a complete topological Hausdorff ring and $M$ be a metrisable topological $R$-module. There exists a topological $R$-module $M^{\textasciicircum}$ such that $M^{\textasciicircum}$ is metrisable, complete and contains $M$ as a dense subset (c.f. [BGT] III, §6.6).}
\end{rem}
\medskip
\begin{prop}\label{noetherian, completition finitely generated => complete}
Let $R$ be a complete non-archimedean topological Hausdorff ring that has a countable fundamental system of neighbourhoods of $0$. Assume that there is a zero sequence of units of $R$, and that $R^{oo}$ is a neighbourhood of $0$. Let $M$ be a metrisable topological $R$-module such that $M^{\textasciicircum}$ is a finitely generated $R$-module. Then one has $M=M^{\textasciicircum}$.
\end{prop}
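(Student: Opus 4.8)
The plan is to view $M$ as a dense submodule of its completion $M^{\textasciicircum}$ (c.f. (\ref{completion of metric modules})) and to apply Lemma (\ref{lemma M=N+A => M=N}) with ambient module $M^{\textasciicircum}$ and submodule $M$. Fixing a generating system $m_1,\ldots,m_l$ of the finitely generated $R$-module $M^{\textasciicircum}$, it suffices to establish
\[
M^{\textasciicircum}=M+\sum_{i=1}^{l} R^{oo}\cdot m_i,
\]
since the lemma then immediately forces $M=M^{\textasciicircum}$.

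The decisive step is to show that $U:=\sum_{i=1}^{l} R^{oo} m_i$ is a neighbourhood of $0$ in $M^{\textasciicircum}$. To this end I would consider the $R$-linear surjection $\pi\colon R^l\to M^{\textasciicircum}$, $(r_1,\ldots,r_l)\mapsto\sum_{i=1}^{l} r_i m_i$, which is continuous because addition and scalar multiplication in $M^{\textasciicircum}$ are. Now $R^l$ is Hausdorff, complete (a finite product of copies of the complete ring $R$) and has a countable fundamental system of open neighbourhoods of $0$, while $M^{\textasciicircum}$ is complete and metrisable; hence property 1) of Theorem (\ref{setting}) holds for $\pi$, and the Open Mapping Theorem yields that $\pi$ is open. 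Since $R^{oo}$ is a neighbourhood of $0$ in $R$ by hypothesis, $(R^{oo})^l$ is a neighbourhood of $0$ in $R^l$, so its image $\pi\big((R^{oo})^l\big)=U$ is a neighbourhood of $0$ in $M^{\textasciicircum}$.

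It then remains to upgrade this to $M^{\textasciicircum}=M+U$, which is a routine density argument: for $x\in M^{\textasciicircum}$ the set $x-U$ is a neighbourhood of $x$ (as $y\mapsto x-y$ is a homeomorphism sending $U$ to $x-U$), so by density of $M$ there is some $m\in M\cap(x-U)$, giving $x\in m+U\subseteq M+U$. Applying Lemma (\ref{lemma M=N+A => M=N}) concludes the proof. I expect the only genuine obstacle to lie in the second step, namely in verifying that $R^l$ inherits completeness and a countable neighbourhood basis from $R$ so that $\pi$ falls under the hypotheses of Theorem (\ref{setting}); once $\pi$ is known to be open, the assumption that $R^{oo}$ is a neighbourhood of $0$ carries the argument and everything else is formal.
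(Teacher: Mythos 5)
Your proposal is correct and is essentially the paper's own argument: both use the surjection $\pi\colon R^l\to M^{\textasciicircum}$, invoke Theorem (\ref{setting}) to conclude $\pi$ is open, deduce that $\sum_{i=1}^{l} R^{oo}\cdot m_i$ is a neighbourhood of $0$, obtain $M^{\textasciicircum}=M+\sum_{i=1}^{l} R^{oo}\cdot m_i$ by density of $M$, and finish with Lemma (\ref{lemma M=N+A => M=N}). You even fill in two details the paper leaves implicit, namely the routine density argument and the verification that $R^l$ is complete with a countable fundamental system of neighbourhoods of $0$.
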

\begin{proof}
Let $\pi$: $R^n\rightarrow M^{\textasciicircum}, (r_1,\ldots,r_n)\mapsto \sum_{i=1}^{n}(r_i \cdot m_i) $ ($m_i \in M^{\textasciicircum	}$) be surjective . $\pi$ is continuous, and hence it is open by (\ref{setting}). Particularly, $\sum_{i=1}^{n}(R^{oo} \cdot m_i)$ is a neighbourhood of $0$ in $M^{\textasciicircum}$. Since in an arbitrary topological group the product of a dense subset and any neighbourhood is the whole group, one has:
\begin{gather*}
M^{\textasciicircum}=M+\sum_{i=1}^{n}(R^{oo} \cdot m_i).
\end{gather*}
No we can apply (\ref{lemma M=N+A => M=N}) and get $M=M^{\textasciicircum}$ .
\end{proof}
\medskip

\begin{defn}\label{defn. Tate Ring}
\emph{A topological ring $R$ is called a \emph{Tate ring} if it satisfies both of the following properties:
\begin{enumerate}
\item $R$ has a topologically nilpotent unit.
\item There exists an open subring $R_0$ of $R$ and a finitely generated ideal $I$ of $R_0$ such that $(I^n)_{n\in \mathbb{N}}$ is a fundamental system of neighbourhoods of $R_0$.
\end{enumerate}
}
\end{defn}

\begin{rem}\label{properties of a tate ring}
\emph{ In the proposition above and the following propositions we considered and will consider a complete topological Hausdorff ring $R$ that has the following properties:
\begin{enumerate}
\item $R$ has a countable fundamental system of open neighbourhoods of $0$.
\item $R$ is non-archimedean.
\item $R^{oo}$ is a neighbourhood of $0$.
\item $R$ has a zero sequence consisting of units .
\end{enumerate}
Note that $R$ for instance satisfies  i)- iv) if it is a complete topological Hausdorff Tate ring.}
\end{rem}

\medskip
\begin{prop}\label{noeth equival closed}
Let $R$ be a complete non-archimedean topological Hausdorff ring that has a countable fundamental system of neighbourhoods of $0$. Assume that there is a zero sequence of units of $R$, and assume that $R^{oo}$ is a neighbourhood of $0$. Consider a complete topological Hausdorff $R$-module $M$ that has a countable fundamental system of open neighbourhoods of $0$. Then one has:
\begin{enumerate}
\item $M$ is noetherian if and only if every submodule of $M$ is closed.
\item  $R$ is noetherian if and only if every ideal of $R$ is closed.
\end{enumerate}
\end{prop}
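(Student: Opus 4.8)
\noindent The plan is to deduce (ii) from (i) applied to the module $M=R$: the ideals of $R$ are exactly the $R$-submodules of $R$, and $R$ itself is a complete topological Hausdorff $R$-module with a countable fundamental system of open neighbourhoods of $0$, so (ii) is the special case $M=R$ of (i). Thus everything rests on part (i), which I would prove by treating the two implications separately.

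\noindent \emph{Noetherian $\Rightarrow$ every submodule closed.} Given a submodule $N\subseteq M$, I would pass to its closure $\overline{N}$, which is again a submodule of $M$ (the closure of a subgroup is a subgroup, and it is stable under each scalar multiplication since these maps are continuous). As a closed subspace of the complete metrisable module $M$, the module $\overline{N}$ is complete, Hausdorff and has a countable fundamental system of open neighbourhoods of $0$; and because $M$ is noetherian, $\overline{N}$ is finitely generated. Equipping $N$ with the subspace topology makes it a metrisable topological $R$-module that is dense in $\overline{N}$, so by the uniqueness of the completion (\ref{completion of metric modules}) the module $\overline{N}$ is a completion of $N$, i.e. $N^{\textasciicircum}=\overline{N}$ is finitely generated. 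Proposition (\ref{noetherian, completition finitely generated => complete}) then gives $N=N^{\textasciicircum}=\overline{N}$, so $N$ is closed.

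\noindent \emph{Every submodule closed $\Rightarrow$ noetherian.} Here I would argue by contradiction. If $M$ were not noetherian there would be a strictly ascending chain of submodules $N_1\subsetneq N_2\subsetneq\cdots$; set $N:=\bigcup_{k}N_k$. By hypothesis $N$ and every $N_k$ are closed in $M$, so $N$ is a closed subspace of the complete metrisable module $M$, hence itself completely metrisable and therefore a Baire space by (\ref{baire theo}). Writing $N=\bigcup_{k}N_k$ as a countable union of closed subsets, Baire's theorem forces some $N_k$ to have nonempty interior in $N$. Then $N_k$ is a submodule of the topological $R$-module $N$ that contains a nonempty open subset of $N$, and since $R$ has a unit in every neighbourhood of $0$ (it has a zero sequence of units), Lemma (\ref{M=N}) yields $N_k=N$, contradicting $N_k\subsetneq N_{k+1}\subseteq N$.

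\noindent The main obstacle is the first implication, and in particular the identification $N^{\textasciicircum}=\overline{N}$ together with the appeal to (\ref{noetherian, completition finitely generated => complete}): this is the step where the ring hypotheses genuinely enter, since it is what turns finite generation of the closure into completeness of $N$ itself. The second implication is comparatively formal, combining Baire's theorem with the open-submodule lemma (\ref{M=N}).
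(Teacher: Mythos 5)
Your proof is correct and follows essentially the same route as the paper: the forward implication via $N^{\textasciicircum}=\overline{N}$ being finitely generated and Proposition (\ref{noetherian, completition finitely generated => complete}), and the converse via Baire's theorem combined with Lemma (\ref{M=N}) applied inside the union of the chain. You merely make explicit two steps the paper leaves implicit — the identification of the completion of $N$ with its closure in $M$, and the fact that the union is Baire because it is itself closed by hypothesis — which is a welcome clarification rather than a deviation.
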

\begin{proof}
$ii)$ follows from $i)$.\\ $i)$ $\glqq\Rightarrow\grqq:$  Since $M$ is complete and noetherian, for all submodules $N$ of $M$, $N^{\textasciicircum}$ is a finitely generated submodule of $M$. By (\ref{noetherian, completition finitely generated => complete}) we see that $N$ is already complete. Hence it is closed in $M$.\\
$\glqq\Leftarrow\grqq:$ Let all submodules of $M$ be closed and consider an ascending chain of such:
\begin{equation*}
M_0\subseteq M_1 \subseteq M_2 \subseteq \ldots \ .
\end{equation*}
 $M^{*}:=\bigcup_{n\in \mathbb{N}_0} M_n$ is a submodule of $M$ and therefore a Baire space. Hence there is an $i\in \mathbb{N}_{0}$ such that $M_i$ possesses an interior point. But then one has $M_i=M^{*}$ by (\ref{M=N}). Therefore the considered chain becomes stationary. 
\end{proof} 
\medskip

\subsection{Unique complete and metrisable topology}

\medskip
\begin{theo}\label{unique r mod strc}
Let $R$ be a complete non-archimedean noetherian topological Hausdorff ring that has a countable fundamental system of neighbourhoods of $0$. Assume that there is a zero sequence of units of $R$ and that $R^{oo}$ is a neighbourhood of $0$ (e.g. $R$ is a complete noetherian Hausdorff Tate ring).
Let $M$ be a finitely generated $R$-module.\\ Then there exists a unique $R$-module topology on $M$ such that $M$ is Hausdorff, complete and has a countable fundamental system of open neighbourhoods of $0$ (i.e. $M$ is metrisable and complete).
\end{theo}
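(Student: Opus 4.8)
The plan is to establish existence and uniqueness separately. For existence I would transport a topology onto $M$ from a finite free module via a quotient construction, and for uniqueness I would invoke Theorem (\ref{lemma cont map}) to force any two admissible topologies to coincide, since the identity map will be continuous in both directions.

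For existence, choose generators $m_1,\dots,m_n$ of the finitely generated $R$-module $M$ and consider the $R$-linear surjection $\pi\colon R^n\to M$, $(r_1,\dots,r_n)\mapsto\sum r_i m_i$. Endow $R^n$ with the product topology; since $R$ is complete, Hausdorff and has a countable fundamental system of neighbourhoods of $0$, the same holds for $R^n$, which is therefore metrisable by (\ref{metrisable}). Because $R$ is noetherian, $R^n$ is a noetherian $R$-module, so by (\ref{noeth equival closed}) every submodule of $R^n$ is closed; in particular $K:=\ker\pi$ is closed. I would then give $M$ the quotient topology through the $R$-module isomorphism $M\cong R^n/K$. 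The quotient of a topological $R$-module by a submodule is again a topological $R$-module; since $K$ is closed the quotient $R^n/K$ is Hausdorff, and by (\ref{quotient complete}) it is metrisable and complete. Thus $M$ acquires an $R$-module topology that is Hausdorff, complete and possesses a countable fundamental system of open neighbourhoods of $0$.

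For uniqueness, suppose $\mathcal{T}_1$ and $\mathcal{T}_2$ are two $R$-module topologies on $M$, each making $M$ Hausdorff, complete and equipped with a countable fundamental system of open neighbourhoods of $0$. Here $M$ is finitely generated and $R$ satisfies the hypotheses of Theorem (\ref{lemma cont map}). Applying that theorem to the identity map $(M,\mathcal{T}_1)\to(M,\mathcal{T}_2)$ — whose source is finitely generated, Hausdorff, complete and carries a countable fundamental system, and whose target is an arbitrary topological $R$-module — shows it is continuous; by the symmetric argument the identity $(M,\mathcal{T}_2)\to(M,\mathcal{T}_1)$ is continuous as well. Hence the identity is a homeomorphism and $\mathcal{T}_1=\mathcal{T}_2$.

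The heavy lifting is done by the results already in place: Theorem (\ref{lemma cont map}) renders uniqueness almost immediate, so the only point demanding real care is existence, and within it the step that $K=\ker\pi$ is closed — precisely where the noetherian hypothesis and (\ref{noeth equival closed}), and hence the full package of running assumptions on $R$, are used. I expect the main obstacle to be bookkeeping: checking that the quotient topology simultaneously realises all four required properties (being a module topology, Hausdorff, complete, and having a countable fundamental system), though no idea beyond (\ref{quotient complete}) and (\ref{metrisable}) should be necessary.
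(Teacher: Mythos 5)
Your proposal is correct and follows essentially the same route as the paper: existence by transporting the topology from $R^n/K$ via the closedness of $K=\ker\pi$ (\ref{noeth equival closed}) together with (\ref{quotient complete}) and (\ref{metrisable}), and uniqueness via continuity of the identity map using Theorem (\ref{lemma cont map}). The only cosmetic difference is that you apply (\ref{lemma cont map}) symmetrically in both directions, whereas the paper applies it once and then concludes with the Open Mapping Corollary (\ref{main.cor}); both variants are valid.
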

\noindent 
The uniqueness in the theorem above holds as well if $R$ is a complete topological Hausdorff ring that has a sequence of units converging to $0$ and a countable fundamental system of open neighbourhoods of $0$. Note that for a noetherian ring $R$ the $R$-module $R^n$ is noetherian for all $n\in \mathbb{N}$.\\

\begin{proof}
\textit{Uniqueness:} Let $\mathcal{T}$ and $\mathcal{T}'$ be two topologies on $M$ such that $M$ with each of this topologies is a complete topological Hausdorff $R$-module that has a countable fundamental system of open neighbourhoods of $0$. Define $M:=(M,\mathcal{T})$ and $M':=(M,\mathcal{T}')$. Consider the identity $id$: $M\rightarrow M'$, which is linear and bijective. By (\ref{lemma cont map}) $id$ is continuous and hence an isomorphism of topological $R$-modules by (\ref{main.cor}). So we have $\mathcal{T}=\mathcal{T}'$.\\
\noindent \textit{Existence:} Consider a linear surjection $\pi$: $R^n \rightarrow M$. Since $R^n$ is noetherian, $K:=\ker(\pi)$ is closed in $R^n$ by (\ref{noeth equival closed}). As $R^n$ is complete and metrisable, $R^n/K$ is complete and metrisable by (\ref{quotient complete}). Therefore it is Hausdorff and has a countable fundamental system of open neighbourhoods of $0$ by (\ref{metrisable}). Let $\bar{\pi}$: $R^n/K\rightarrow M$ be the induced linear bijection. Endow $M$ with the topology such that $U\subseteq M$ is open if and only if $\bar{\pi}^{-1}(U)\subseteq R^n/K$ is open. Then $\bar{\pi}$ is an isomorphism of topological $R$-modules and hence there exists a desired topology on $M$.
\end{proof}

\end{document}